\def\BB/{B\"acklund}
\def\BT/{B\"acklund transformation}
\def\MA/{Monge-Amp\`ere}
\def\R{{\mathbb R}}
\def\B{{\EuScript B}}
\def\M{{\EuScript M}}
\renewcommand{\P}{{\EuScript P}}
\def\I{{\mathcal I}}
\def\J{{\EuScript J}}
\def\w{\omega}
\def\vomega{\boldsymbol{\omega}}
\def\other#1{\underline{#1}}
\def\thetabar{\other{\theta}}
\def\Omegabar{\other{\Omega}}
\def\xbar{\other{x}}
\def\ybar{\other{y}}
\def\ubar{\other{u}}
\def\pbar{\other{p}}
\def\qbar{\other{q}}
\def\Lbar{\other{L}}
\def\&{\wedge}
\def\di{\partial}
\def\Mbar{\other{\M}}
\def\pibar{\other{\pi}}
\def\Ibar{\other{\I}}
\def\teta{\tilde{\eta}}
\def\talpha{\tilde{\alpha}}
\def\tbeta{\tilde{\beta}}
\def\tkappa{\tilde{\kappa}}
\def\hoch#1{\hat{#1}}
\def\hw{\hoch\w}
\def\htheta{\hoch\theta}
\def\hthetabar{\hoch\thetabar} 
\def\unter#1{{#1}}
\def\utheta{\unter\theta}
\def\uthetabar{\unter\thetabar}
\def\uw{\unter\w}
\def\Lie{\mathcal{L}}
\def\wt{\widetilde}
\def\Fp{F_1'}
\def\Fpp{F_1''}
\def\Gp{G_1'}
\def\Gpp{G_1''}
\newcommand\dibu{\dfrac{\di}{\di u}}
\newcommand\dibv{\dfrac{\di}{\di v}}
\newcommand\dibh{\dfrac{\di}{\di h}}
\newcommand\dibk{\dfrac{\di}{\di k}}
\newcommand\igenl{\left\langle}
\newcommand\igenr{\right\rangle}
\DeclareMathAccent{\widehat}{\mathord}{largesymbols}{"62}
\DeclareFontFamily{U}{mathx}{\hyphenchar\font45}
\DeclareFontShape{U}{mathx}{m}{n}{
      <5> <6> <7> <8> <9> <10>
      <10.95> <12> <14.4> <17.28> <20.74> <24.88>
      mathx10
      }{}
\DeclareSymbolFont{mathx}{U}{mathx}{m}{n}
\DeclareMathAccent{\widecheck}{0}{mathx}{"71}
\newcommand{\intprod}{\mathbin{\raisebox{.4ex}{\hbox{\vrule height .5pt width
5pt depth 0pt %
         \vrule height 3pt width .5pt depth 0pt}}}}
\newtheoremstyle{mytheoremstyle} 
    {.5\headsep}                    
    {.5\headsep}                    
    {\sl}                   
    {}                           
    {\bf}                   
    {.}                          
    {.5em}                       
    {}  
\theoremstyle{mytheoremstyle}
\newtheorem{theorem}{Theorem}[section]
\newtheorem{lemma}[theorem]{Lemma}
\newtheorem{cor}[theorem]{Corollary}
\newtheorem{prop}[theorem]{Proposition}
\newtheorem{definition}[theorem]{Definition}
\newtheorem*{theorem*}{Theorem}
\theoremstyle{definition}
\newtheorem{remark}[theorem]{Remark}
\newtheorem*{remark*}{Remark}
\numberwithin{equation}{section}
\begin{document}
\title{Geometric characterization and classification of B\"acklund transformations of sine-Gordon type}
\author{Jeanne N. Clelland}
\address{Department of Mathematics, 395 UCB, University of
Colorado,
Boulder, CO 80309-0395}
\email{Jeanne.Clelland@colorado.edu}
\author{Thomas A. Ivey}
\address{Dept. of Mathematics, College of Charleston\\
66 George St., Charleston SC 29424-0001}
\email{IveyT@cofc.edu}
\date{\today}

\subjclass[2010]{Primary(37K35, 35L10), Secondary(58A15, 53C10)}
\keywords{B\"acklund transformations, integrable systems, hyperbolic Monge-Amp\`ere systems, exterior differential systems}

\thanks{The first author was supported in part by NSF grants DMS-0908456 and DMS-1206272.}

\begin{abstract}
We begin by considering several properties commonly (but not universally) possessed by \BT/s between
 hyperbolic Monge-Amp\`ere equations: wavelike nature of the underlying equations,
 preservation of independent variables, quasilinearity of the transformation, and autonomy of the transformation.
We show that, while these properties all appear to depend on the formulation of both the underlying PDEs and the B\"acklund transformation in a particular
coordinate system, in fact they all have intrinsic geometric meaning, independent of any particular choice of local coordinates.

Next, we consider the problem of classifying
\BT/s with these properties. We show that, apart from a family of transformations between Monge-integrable equations, there exists only a finite-dimensional family of such transformations, including the well-known family of \BT/s for the sine-Gordon equation. The full extent of this family is not yet determined, but our analysis has uncovered previously unknown transformations among generalizations of Liouville's equation.
\end{abstract}

\maketitle

\section{Introduction}
The classical B\"acklund transformation for the sine-Gordon equation
\begin{equation}
z_{xy}=\sin z \label{SGE}
\end{equation}
is a system of two partial differential equations for unknown functions $u(x,y)$
and $v(x,y)$:
\begin{equation} \label{sgbt}
\begin{aligned}
u_x - v_x &= 2\lambda \sin\left(\frac{u+v}{2}\right), \\
u_y +v_y&= \frac{2}{\lambda}\sin\left(\frac{u-v}{2}\right).
\end{aligned}
\end{equation}
The system \eqref{sgbt} has the property that if $z = u(x,y)$ is a given solution of \eqref{SGE},
then solving the system
for $v(x,y)$ with $\lambda$ fixed
gives a 1-parameter family of new solutions $z = v(x,y)$ of \eqref{SGE}.
(For example, starting with the trivial solution $u=0$ gives the
1-soliton solutions of sine-Gordon, with initial position depending
on a constant of integration and velocity depending on the choice of the nonzero constant $\lambda$.)
Note that, given $u$,  the partial derivatives of $v$ are completely determined;
the compatibility condition of the two resulting equations for $v$ is precisely that $u$ satisfies the sine-Gordon equation \eqref{SGE}.

In general, \BT/s provide a way to obtain new solutions of a partial differential equation (or system of PDEs)
by starting with a given solution of the same (or a different) PDE and solving an auxiliary system
of {\em ordinary} differential equations.  However, the transformation \eqref{sgbt} has some special
properties, including:
\begin{enumerate}
\item\label{autoprop} the transformation \eqref{sgbt} is an {\em auto}-\BT/, i.e., it links two solutions of the {\em same} PDE---namely, the sine-Gordon equation \eqref{SGE};
\item\label{startprop} the underlying PDE \eqref{SGE} is a {\em wavelike equation}---i.e., a hyperbolic PDE of the form $z_{xy} = f(x,y,z,z_x,z_y)$;
\item the transformation \eqref{sgbt} preserves the independent variables $x$ and $y$;
\item the relations between the partial derivatives of $u$ and $v$ defined by the transformation \eqref{sgbt} are linear;
\item\label{endprop} the transformation \eqref{sgbt} has no explicit dependence on $x$ or $y$--thus, we say that the transformation \eqref{sgbt} is {\em autonomous};
\item\label{paramprop} the transformation \eqref{sgbt} is actually a one-parameter family of transformations, depending on $\lambda$.
\end{enumerate}

B\"acklund transformations with property (\ref{paramprop}) were the subject of one of our earlier papers \cite{CI05}.
In the present paper, we will concentrate on the other properties on this list.  We begin in \S \ref{review-sec} by reviewing the geometric formulation given in \cite{C02} for B\"acklund transformations of hyperbolic Monge-Amp\`ere systems in terms of exterior differential systems.  In \S \ref{results-sec} we show that, while properties (\ref{startprop})--(\ref{endprop}) all appear
to depend on the formulation of both the PDE \eqref{SGE} and the B\"acklund transformation \eqref{sgbt}
in a particular coordinate system, in fact they all have intrinsic geometric meaning, independent
of any particular choice of local coordinates.  In \S \ref{classification-sec}, we consider the problem of {\em classifying} \BT/s with these properties, and we give a characterization of quasilinear, autonomous, wavelike \BT/s as solutions of the overdetermined PDE system \eqref{bigsys}.  We perform a detailed analysis of the solution space of this system in order to classify such transformations; in particular, we show that apart from a family of transformations between Monge-integrable equations, the space of all such transformations is finite-dimensional.
Finally, in \S \ref{discussion-sec} we discuss some of the limitations of our approach.

Perhaps surprisingly, a geometric characterization of property (\ref{autoprop})--being an
auto-\BT/--in terms of invariants
for the associated exterior differential system remains elusive; we hope to consider this issue in a future paper.

\section{Geometric formulation of \BT/s}\label{review-sec}

In this section, we review how to formulate hyperbolic Monge-Amp\`ere PDEs, as well
as B\"acklund transformations between them, as exterior differential systems.
We will use the sine-Gordon equation \eqref{SGE} and its B\"acklund transformation
\eqref{sgbt} as examples to illustrate the general constructions.

\subsection{Hyperbolic Monge-Amp\`ere systems}

The existence of a \BT/ between two partial differential
equations is a property that is independent of changes of coordinates.  The geometric
viewpoint we adopt for studying such properties is that of {\em exterior differential systems},
in which a PDE or system of PDEs is described by a differentially closed ideal $\I$ of differential forms on
a manifold, and solutions to the PDE are in one-to-one correspondence with submanifolds to which
the forms in the ideal pull back to be zero.
(Such submanifolds are called {\em integral submanifolds} or {\em integrals} of the system.)

For example, if we let $\I$ be the differential ideal generated by the differential forms
\begin{equation}\label{siggy}
\theta = du - p\,dx -q\,dy, \qquad \Omega = (dp -(\sin u)\,dy)\wedge dx
\end{equation}
on the manifold $\R^5$ with coordinates $(x,y,u,p,q)$, then solutions of the sine-Gordon equation \eqref{SGE}
are in one-to-one correspondence with surfaces in $\R^5$ on which $\theta$, $\Omega$,
and their exterior derivatives vanish, and on which the 2-form $dx \wedge dy$ is never zero.
This can be seen as follows: the condition that $dx \wedge dy$ is nonvanishing on a surface
$\Sigma \subset \R^5$ is equivalent to the condition that $\Sigma$ is a graph over the the $xy$ plane.
Thus $\Sigma$ is defined by equations of the form
\[ u = u(x,y), \qquad p = p(x,y), \qquad q = q(x,y). \]
Then the vanishing of $\theta$ implies that $p=u_x$ and $q=u_y$, while the
vanishing of $\Omega$ implies that $u_{xy} = p_y = \sin u$.

This is an example of a {\em Monge-Amp\`ere} exterior differential system on a 5-dimensional manifold $\M$.
Any such exterior differential system $\I$ is generated locally by a contact 1-form $\theta$ and a
2-form $\Omega$, with the property that at each point
$\Omega$ is linearly independent from $d\theta$ and wedge products with $\theta$.
As an ideal within the ring of differential forms on $\M$, $\I$ is generated algebraically by $\theta$, $d\theta$, and $\Omega$;
we will denote this by $\I = \igenl \theta, d\theta, \Omega \igenr.$
Given such a system, the Pfaff theorem implies that there always exist local coordinates $(x,y,u,p,q)$ such that (up to a nonzero multiple)
\[ \theta=du-p \,dx -q\,dy. \]
Then by subtracting off suitable multiples of $\theta$ and $d\theta$, we can assume that
\[ \Omega = A\, dp \& dy + \tfrac{1}{2}B\, (dx \& dp + dq \& dy) + C\, dx \& dq + D \,dp \& dq + E\, dx \& dy \]
for some functions $A,B,C,D,E$.
Thus, by the same argument as above, integral surfaces of $\I$ on which $dx \wedge dy$ is never zero are in one-to-one correspondence
with solutions of a \MA/ PDE
\begin{equation}
A u_{xx} + B u_{xy} + C u_{yy} + D (u_{xx} u_{yy} - u_{xy}^2) + E = 0, \label{MA-PDE}
\end{equation}
where $A,B,C,D,E$ are functions of the variables $(x,y,u,u_x, u_y)$.

\MA/ equations comprise the smallest class of second-order PDEs for one function of two variables
that is invariant under contact transformations and contains the quasilinear equations (see, e.g., Chapter 2
in \cite{Goursat}).
So, if one is interested in
studying equations like the sine-Gordon equation \eqref{SGE} from a geometric viewpoint, it is natural to focus on \MA/ exterior differential systems.
Note that in the example \eqref{siggy},
the 2-form generator $\Omega$ is {\em decomposable}, i.e., a wedge-product of two 1-forms.
In fact, we can choose algebraic generators for $\I$ consisting of $\theta$ and two 2-forms $\Omega_1, \Omega_2$ that are both decomposable.  Specifically, if we take
\[ \Omega_1 = (dp -(\sin u)\,dy)\wedge dx, \qquad \Omega_2 = (dq -(\sin u)\,dx)\wedge dy, \]
then we have
\[ \Omega = \Omega_1, \qquad d\theta = -(\Omega_1 + \Omega_2), \]
and so
\[ \I = \igenl\theta, d\theta, \Omega\igenr = \igenl\theta, \Omega_1, \Omega_2\igenr. \]
\MA/ systems with this property are called {\em hyperbolic}, because this decomposability condition
is equivalent to the condition that the corresponding PDE \eqref{MA-PDE} is hyperbolic in the usual sense.

\subsection{B\"acklund transformations of hyperbolic \MA/ systems}
\label{BTdef}
The following geometric definition of a \BT/ between two hyperbolic \MA/ systems is based on that given in \cite{CI09} and \cite{CFB2} (see,
for example, Defn. 7.5.10 in the latter reference).

\begin{definition}
Let $(\M, \I)$, $(\Mbar, \Ibar)$ be hyperbolic Monge-Amp\`ere systems on 5-dimensional manifolds $\M, \Mbar$
respectively.  A {\em \BT/} between $(\M, \I)$ and $(\Mbar, \Ibar)$ is a 
manifold $\B$, equipped with submersions $\pi:\B \to \M$ and $\pibar:\B \to \Mbar$ whose fibers are transverse, and a Pfaffian exterior differential system $\J$
on $\B$ with the property that $\J$ is an integrable extension of both $\I$ and $\Ibar$.
\end{definition}

\begin{center}
\begin{tikzpicture}
\node (total) at (0,1.5) {$\B$};
\node (ujet) at (-2,0) {$\M$};
\node (vjet) at (2,0) {$\Mbar$};
\draw[->,thick] (total) -- node[above left]  {$\pi$} (ujet);
\draw[->,thick] (total) -- node[above right] {$\pibar$} (vjet);
\end{tikzpicture}
\end{center}

In this article, we will restrict our attention to the lowest-dimensional case, where $\B$ is a 6-dimensional manifold and $\J$ has rank 2.
(However, not all \BT/s of interest arise in this way; see \S\ref{discussion-sec}.)
Let
\[ \I = \igenl\theta, \Omega_1, \Omega_2\igenr, \qquad \Ibar = \igenl\thetabar, \Omegabar_1, \Omegabar_2\igenr \]
where the $\Omega_i$ and $\Omegabar_i$ are decomposable 2-forms.
Then $\J$ is the differential ideal on $\B$ generated by the 1-forms $\pi^*\theta, \pibar^*\thetabar$
and their exterior derivatives.  The condition that $\J$ be an integrable extension of both $\I$ and $\Ibar$ means that
\begin{equation}
\begin{aligned}
\pi^* d\theta &\equiv 0 \mod \pi^*\theta, \pibar^* \thetabar,  \pibar^*\Omegabar_1, \pibar^*\Omegabar_2,\\
\pibar^* d\thetabar &\equiv 0 \mod \pibar^*\thetabar, \pi^*\theta, \pi^* \Omega_1, \pi^* \Omega_2.\\
\end{aligned} \label{int-conds}
\end{equation}
Consequently, when $\J$ is restricted to the inverse image (under $\pi$) of an integral submanifold of $\I$, it
satisfies the Frobenius integrability condition, and similarly for the inverse image of an integral of $\Ibar$.
We will also assume that the \BT/ satisfies the technical condition that $\pi^* d\theta$ and
$\pibar^* d\thetabar$ are linearly independent modulo $\pi^*\theta$ and $\pibar^* \thetabar$; we
call such transformations {\em normal}.

For example, in the sine-Gordon example above, the systems $(\M, \I)$, $(\Mbar, \Ibar)$ are each taken to be
copies of the exterior differential system described in the previous section: $\M = \R^5$ with coordinates $(x,y,u,p,q)$,
$\Mbar = \R^5$ with coordinates $(\xbar, \ybar, \ubar, \pbar, \qbar)$, and
\begin{align*}
\I &= \igenl \theta = du - p\, dx - q\, dy,\ \Omega_1 = (dp -(\sin u)\,dy)\wedge dx, \ \Omega_2 = (dq -(\sin u)\,dx)\wedge dy \igenr, \\
\Ibar & = \igenl \thetabar = d\ubar - \pbar\, d\xbar - \qbar\, d\ybar,\ \Omegabar_1 = (d\pbar -(\sin \ubar)\,d\ybar)\wedge d\xbar, \ \Omegabar_2 = (d\qbar -(\sin \ubar)\,d\xbar)\wedge d\ybar \igenr.
\end{align*}
$\B$ is the 6-dimensional submanifold of $\M \times \Mbar$ defined by the equations $\xbar = x, \ybar = y$
(because the transformation preserves the independent variables $x,y$) and the two equations
\begin{equation}
 p - \pbar  = 2 \lambda \sin\left(\frac{u+\ubar}{2}\right), \qquad
q + \qbar  = \frac{2}{\lambda}\sin\left(\frac{u-\ubar}{2}\right), \label{SGEback-defineB}
\end{equation}
which are equivalent to equations \eqref{sgbt}.  (We will see later that it is advantageous to
regard these equations as defining $p, \qbar$ as functions of the independent variables $(x, y, u, \ubar, \pbar, q)$ on $\B$.)
It is straightforward to check that the pullbacks of $\I, \Ibar$ to $\B$ satisfy the integrability conditions \eqref{int-conds},
and that $(\B, \J)$ is a normal \BT/ between $(\M, \I)$ and $(\Mbar, \Ibar)$.
The fact that the restriction of $\J$ to the inverse image of an integral submanifold of $\I$
satisfies the Frobenius condition is equivalent to the statement that whenever the function $u(x,y)$
is a solution of the sine-Gordon equation \eqref{SGE}, the system \eqref{sgbt} is a compatible system
for the unknown function $v(x,y)$ whose solutions can be constructed by solving ODEs.

In \cite{C02} it is shown that a normal \BT/ between two hyperbolic \MA/ systems determines,
and is determined by, a $G$-structure $\P$ on the 6-dimensional manifold $\B$, where
$G\subset GL(6,\R)$ consists of matrices of the form
\begin{equation}\label{groupie}
\begin{pmatrix} a & 0 & 0 & 0\\ 0 & b & 0 & 0 \\ 0 &0 & B & 0 \\ 0 & 0 & 0 & A\end{pmatrix},
\qquad A,B \in GL(2,\R), \  a=\det A, b=\det B.
\end{equation}
(Recall that this means $\P$ is a principal sub-bundle of the general linear coframe bundle on $\B$, and a simple transitive $G$-action on the fibers
of $\P$ is induced by the inclusion $G \subset GL(6,\R)$.)
We will use the notation $(\htheta, \hthetabar, \hw^1, \hw^2, \hw^3, \hw^4)$ for the components
of the canonical $\R^6$-valued 1-form on $\P$.  In \cite{C02} is it shown that
there is a connection form $\Upsilon$ on $\P$ (taking value in the Lie algebra of $G$) such that the canonical forms satisfy structure equations
\begin{equation}\label{streq}
d\begin{bmatrix} \htheta \\ \hthetabar \\ \hw^1 \\ \hw^2 \\ \hw^3 \\ \hw^4 \end{bmatrix}
= \Upsilon \& \begin{bmatrix} \htheta \\ \hthetabar \\ \hw^1 \\ \hw^2 \\ \hw^3 \\ \hw^4 \end{bmatrix}
+ \begin{bmatrix}
A_1(\hw^1-C_1 \htheta) \& (\hw^2 -C_2\htheta) + \hw^3 \& \hw^4 \\
\hw^1 \& \hw^2 + A_2 (\hw^3 -C_3 \hthetabar) \& (\hw^4 -C_4\hthetabar) \\
B_1 \htheta \& \hthetabar + C_1 \hw^3 \& \hw^4 \\
B_2 \htheta \& \hthetabar + C_2 \hw^3 \& \hw^4 \\
B_3 \htheta \& \hthetabar + C_3 \hw^1 \& \hw^2 \\
B_4 \htheta \& \hthetabar + C_4 \hw^1 \& \hw^2
\end{bmatrix}.
\end{equation}
The connection form is not unique, but $A_i$, $B_i$, $C_i$ are well-defined torsion functions on $\P$ (see \cite{C02} for more details).
The relationship between the $G$-structure and the Pfaffian systems involved in the \BT/ is that,
if  $(\utheta,\uthetabar, \uw^1, \uw^2, \uw^3, \uw^4)$ is a local section of $\P$, then on its domain
$\pi^*\I=\igenl\utheta, \uw^1 \& \uw^2, \uw^3 \& \uw^4\igenr$
and $\pibar^*\Ibar = \igenl\uthetabar, \uw^1 \& \uw^2, \uw^3 \& \uw^4\igenr$.
Because $\utheta$ and $\uthetabar$ must each have Pfaff rank 5, $A_1$ and $A_2$ must be nonzero at
every point; furthermore, the condition of normality implies that $A_1 A_2 -1$ is also nonzero everywhere.

The $G$-structure endows $T^*\B$ with a well-defined splitting
\begin{equation}\label{Gsplit}
T^* \B =L \oplus \Lbar \oplus W_1 \oplus W_2
\end{equation}
such that, given any local section of $\P$, $L$ is spanned by $\utheta$, $\Lbar$ by
$\uthetabar$, $W_1$ by $\{\uw^1, \uw^2\}$, and $W_2$ by $\{\uw^3, \uw^4\}$.
In \cite{CI09} it is shown that the torsion functions are components of well-defined tensors on $\B$ which
are maps between bundles associated to terms in this splitting.  One way to see this is to study how these functions
vary along the fibers.  For example, if $g\in G$ is the group element given by \eqref{groupie}, then
$$R_g^* A_1 = a^{-1} b A_1,\qquad R_g^* A_2 =b^{-1} a A_2. $$
Notice that this implies that the product $A_1 A_2$ is a well-defined function on $\B$.
It also follows that $A_1$ and $A_2$ are components of well-defined tensors in $L \otimes \Lambda^2 W_2^*$
and $\Lbar \otimes \Lambda^2 W_1^*$, respectively.   Similarly,
the vectors $[C_1, C_2]$ and $[C_3, C_4]$ are components
of well-defined tensors
$\tau_1 \in \Gamma(W_1^* \otimes \Lambda^2 W_2)$
and $\tau_2 \in \Gamma(W_2^* \otimes \Lambda^2 W_1)$.
In fact, these tensors are
just the exterior derivative followed by an appropriate quotient map; for example
$\tau_1$ is simply the exterior derivative applied to sections of $W_1$, modulo
1-forms in $L, \Lbar$ and $W_1$.


For the sine-Gordon example above, one can take the following local section of $\P$
(recall that $\B \subset \R^5 \times \R^5$ is defined by $\xbar = x, \ybar = y$, and equations \eqref{SGEback-defineB}):
$$
\begin{aligned}
\theta & = du - p\, dx - q\, dy 
= du - \left(\pbar + 2 \lambda \sin\left(\frac{u+\ubar}{2}\right)\right)\, dx - q\, dy, \\
\thetabar & = d\ubar - \pbar\, dx - \qbar\, dy 
= d\ubar - \pbar\, dx - \left( -q + \frac{2}{\lambda}\sin\left(\frac{u-\ubar}{2}\right) \right)\, dy, \\
\omega^1 & = dx, \\
\omega^2 & = d\pbar - (\sin \ubar)\, dy + \lambda\cos\left(\frac{u+\ubar}{2}\right) \thetabar, \\
\omega^3 & = dy, \\
\omega^4 & = dq - (\sin u)\, dx - \frac{1}{\lambda} \cos\left(\frac{u-\ubar}{2}\right) \theta.
\end{aligned}
$$
The specific multiples of $\theta, \thetabar$ appearing in $\omega^2, \omega^4$ are uniquely determined by the conditions
\begin{equation}\label{some-d-conds}
\begin{aligned}
 d\theta & \equiv 0 \mod{ \theta, \omega^1 \& \omega^2, \omega^3 \& \omega^4 }, \\
 d\thetabar & \equiv 0 \mod{ \thetabar, \omega^1 \& \omega^2, \omega^3 \& \omega^4 },
\end{aligned}
\end{equation}
which are necessary to satisfy the structure equations \eqref{streq}.  The torsion functions associated to this local section are:
\begin{gather*}
A_1 = 1, \qquad A_2 = -1, \qquad 
B_1 = B_3 = C_1 = C_3 = 0, \\
B_2 = -\frac{\lambda}{2} \sin \left( \frac{u + \ubar}{2} \right), \qquad
B_4 = \frac{1}{2\lambda} \sin \left( \frac{u - \ubar}{2} \right), \\
C_2 = -\lambda \cos \left( \frac{u + \ubar}{2} \right), \qquad
C_4 = -\frac{1}{\lambda} \cos \left( \frac{u -\ubar}{2} \right).
\end{gather*}

\section{Wavelike, quasilinear, autonomous \BT/s}\label{results-sec}

In this section, we show how properties (\ref{startprop})-(\ref{endprop}) of the \BT/ \eqref{sgbt} may be characterized geometrically,
in terms of the invariants $A_i, B_i, C_i$ associated to the $G$-structure $\P$ determined by the exterior differential system $(\B, \J)$.

\subsection{Wavelike \BT/s}
A hyperbolic Monge-Amp\`ere PDE is called {\em wavelike} if it may be expressed in local coordinates as
\begin{equation}\label{waveform} u_{xy} = f(x,y,u,u_x,u_y);
\end{equation}
this is equivalent to the condition that the characteristics are tangent to the coordinate directions at each point.
In terms of these local coordinates, a \BT/ between two wavelike Monge-Amp\`ere PDEs is called {\em wavelike}
if it preserves the characteristic directions; this is equivalent to the condition that it preserves the characteristic independent variables $x,y$ up to a transformation of the form
\[ \xbar = \phi(x), \qquad \ybar = \psi(y). \]
By making an analogous change of independent variables for one PDE or the other, we may assume without loss of generality that a wavelike \BT/ satisfies the condition $\xbar = x, \ybar = y$.

A wavelike \BT/ between two wavelike PDEs
\[ u_{xy} = f(x,y,u,u_x,u_y), \qquad v_{xy} = g(x,y,v, v_x, v_y) \]
is generally described by equations of the form
\[ u_x = F(x,y,u,v,u_y, v_x), \qquad v_y = G(x,y,u,v,u_y,v_x). \]
(This will be made more precise in the proof of Proposition \ref{waveprop}.)
In this case the following coframing of $\B$ is a local section of $\P$:
\begin{align}
\theta & = du - F\, dx - q\, dy \notag \\
\thetabar & = d\ubar - \pbar\, dx - G\, dy \notag \\
\omega^1 & = dx, \label{wavelike-BT-coframing} \\
\omega^2 & = d\pbar - g\, dy + r_1 \thetabar, \notag\\
\omega^3 & = dy, \notag \\
\omega^4 & = dq - f\, dx - r_2 \theta, \notag
\end{align}
where $r_1, r_2$ are functions on $\B$ uniquely determined by the conditions \eqref{some-d-conds}.

Note that the Pfaffian systems $W_1 = \{\omega^1, \omega^2\}$ and $W_2 = \{\omega^3, \omega^4\}$
each contain a rank one integrable subsystem: $\{dx\} \subset W_1$ and $\{dy\} \subset W_2$.
It turns out that, conversely, this condition characterizes wavelike \BT/s; this is similar to the well-known result (see, e.g., \cite{Tunitskii}) that a hyperbolic Monge-Amp\`ere PDE is wavelike if and only each of its characteristic systems contains a rank 1 integrable subsystem.  In order to demonstrate this,
we give the following geometric definition for a wavelike \BT/, and in Proposition \ref{waveprop}
we will show that any \BT/ that satisfies this geometric condition is wavelike in the sense above.

\begin{definition}\label{wavelike-BT-def}A \BT/ between two hyperbolic Monge-Amp\`ere equations will be called {\em wavelike} if each of the Pfaffian systems $W_1, W_2$ is not integrable but contains a rank one integrable subsystem.
\end{definition}

\begin{remark}
In \cite{C02} it is shown that $W_1$ (resp. $W_2$) is integrable if and only if the tensor $\tau_1$ (resp., $\tau_2$) is identically zero.
\BT/s where one (or both) of $\tau_1, \tau_2$ vanishes are highly degenerate, and are classified in \cite{C02}.
Here, we will assume that both are nonzero at each point, which is equivalent to the vectors $[C_1, C_2]$, $[C_3, C_4]$ being both nonzero.
\end{remark}

\begin{prop}\label{waveprop} Let $\P \searrow \B$ define wavelike normal \BT/ which is wavelike
 in the sense of Defn. \ref{wavelike-BT-def}, and for which both tensors $\tau_1, \tau_2$ are nonvanishing on $\B$.
 Then near any point on $\B$
there exist local coordinates $x,y,u,v,p,q$ and functions $f,g,F,G$
such that the following is a section of $\P$:
\begin{equation}\label{wavecof}
\begin{aligned}
\utheta &= du - F \,dx - q\,dy, \qquad & \uw^1 &= dx,\qquad &\uw^2 &= dp -g\,dy - (F_v/F_p)\uthetabar,\\
\uthetabar &= dv - p \,dx - G\, dy, & \uw^3 &=dy,\qquad & \uw^4 &= dq -f \,dx - (G_u/G_q) \utheta ,
\end{aligned}
\end{equation}
and $f,g,F,G$ satisfy the following partial differential equations:
\begin{gather}
F_q = G_p = 0, \label{pde1} \\
f - g F_p = F_y +  q F_u+G F_v \label{solvefg1},\\
g - f G_q = G_x + F G_u + p\, G_v \label{solvefg2},\\
0 = f_v F_p - f_p F_v = g_u G_q - g_q G_u. \label{dropFG}
\end{gather}
Moreover, the quantities $F_p, G_q,$ and $F_p G_q - 1$ are nonzero at every point of $\B$;
in particular, equations \eqref{solvefg1}, \eqref{solvefg2} can be solved for $f$ and $g$.
In these coordinates, $\B$ represents a \BT/ between the wavelike partial differential equations
\begin{equation}
 u_{xy} = f(x,y,u, u_x, u_y), \qquad v_{xy} = g(x,y,v,v_x,v_y), \label{nonlinear-wave-eqs}
\end{equation}
with the transformation given by the equations
\begin{equation}
u_x = F(x,y,u,v,v_x), \qquad v_y = G(x,y,u,v,u_y). \label{wavelike-BT}
\end{equation}

\end{prop}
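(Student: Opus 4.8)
The plan is to start from Definition~\ref{wavelike-BT-def} and the structure equations \eqref{streq}, and use the rank-one integrable subsystems to build the desired coordinates. First I would invoke the hypothesis that $W_1$ contains a rank-one integrable subsystem: since the splitting \eqref{Gsplit} is well-defined, and since the way $[C_1,C_2]$ transforms along the fibers (it spans the tensor $\tau_1$) lets us normalize, I would choose a local section of $\P$ adapted so that the integrable line field in $W_1$ is spanned by $\uw^1$, and similarly so that the integrable line field in $W_2$ is spanned by $\uw^3$. Frobenius then gives local functions with $\uw^1 = dx$ and $\uw^3 = dy$ (after rescaling, which is permitted because the group $G$ in \eqref{groupie} acts on $W_1, W_2$ by arbitrary $GL(2,\R)$ factors). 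The assumption that $\tau_1,\tau_2$ are nonvanishing guarantees $C_2\ne 0$ (resp.\ $C_4\ne0$) in this adapted section, so that $\uw^1\&\uw^2$ and $\uw^3\&\uw^4$ genuinely involve the second basis forms.

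Next I would produce the coordinates $u,v,p,q$. From the $G$-structure description, $\pi^*\I = \{\utheta, \uw^1\&\uw^2, \uw^3\&\uw^4\}$ and $\pibar^*\Ibar = \{\uthetabar,\uw^1\&\uw^2,\uw^3\&\uw^4\}$; since each of these is a Monge--Amp\`ere system, the Pfaff theorem (as recalled in \S\ref{review-sec}) lets me write $\utheta = du - p\,dx - q\,dy$ and $\uthetabar = dv - \bar p\,dx - \bar q\,dy$ in suitable coordinates, after absorbing multiples of $\uw^1=dx$, $\uw^3=dy$. Preservation of the independent variables (which is built into the wavelike normalization $\uw^1=dx$, $\uw^3=dy$) forces the $x,y$ on the two sides to agree. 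Writing $p = F$, $\bar q = G$ and keeping $q,\bar p$ as independent fiber coordinates yields the first line of \eqref{wavecof} with $\uw^2 = dp - (\cdots)\,dy + (\cdots)\uthetabar$, $\uw^4 = dq - (\cdots)\,dx + (\cdots)\utheta$; here the coefficients of $dy$ in $\uw^2$ and of $dx$ in $\uw^4$ are named $g$ and $f$, and the multiples of $\uthetabar$, $\utheta$ are the unique ones enforcing \eqref{some-d-conds}, which come from the structure equations \eqref{streq}.

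The bulk of the work is then computational: differentiate $\utheta,\uthetabar,\uw^2,\uw^4$ in these coordinates and match against \eqref{streq}. Computing $d\utheta \wedge \utheta$ and requiring it to lie in the span of $\uw^1\&\uw^2$ and $\uw^3\&\uw^4$ forces $F_q = 0$ and produces \eqref{solvefg1} together with the identification $r_2 = G_u/G_q$; symmetrically $d\uthetabar$ gives $G_p = 0$, \eqref{solvefg2}, and $r_1 = F_v/F_p$ --- which is where the coefficients $-(F_v/F_p)$ and $-(G_u/G_q)$ in \eqref{wavecof} come from. Requiring $d\uw^2$ and $d\uw^4$ to have the form dictated by \eqref{streq} (no spurious $\utheta\&\uw^1$-type terms beyond what the torsion allows) yields the compatibility relations \eqref{dropFG}. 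For the nonvanishing claims: $F_p \ne 0$ and $G_q\ne 0$ because otherwise $\uw^2$ or $\uw^4$ would not be independent of the other forms (equivalently $A_1, A_2\ne 0$ from the excerpt), and $F_p G_q - 1\ne 0$ is exactly the translation of normality, i.e.\ $A_1 A_2 - 1 \ne 0$ --- indeed one checks $A_1 = F_p$, $A_2 = G_q^{-1}$ or a similar relation in this section, so $A_1 A_2 - 1 \ne 0$ is $F_p G_q^{-1}\ne 1$, up to a harmless reciprocal; I would track this carefully. Finally, reversing the Pfaff argument from \S\ref{review-sec}: integral surfaces on which $dx\&dy\ne0$ satisfy $p = u_x$, $q=u_y$, hence $u_x = F(x,y,u,v,v_x)$ (using $F_q=0$, so $F$ does not depend on $u_y$, and $\bar p = v_x$), and \eqref{solvefg1} becomes $u_{xy} = f$; symmetrically on the barred side, giving \eqref{nonlinear-wave-eqs}--\eqref{wavelike-BT}. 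The main obstacle I anticipate is bookkeeping: correctly identifying which torsion coefficients $A_i,B_i,C_i$ correspond to which derivatives of $f,g,F,G$, and making sure the absorption of $\utheta,\uthetabar$ multiples is done consistently so that \eqref{some-d-conds} holds on the nose; getting the precise coefficients $F_v/F_p$ and $G_u/G_q$ (rather than, say, their negatives or reciprocals) requires care with signs and with which 1-forms are being reduced modulo which.
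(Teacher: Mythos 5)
Your proposal is correct and follows essentially the same route as the paper: adapt a section so that $\uw^1=dx$, $\uw^3=dy$ span the integrable subsystems, use the Frobenius property of $\{\utheta,dx,dy\}$ and $\{\uthetabar,dx,dy\}$ to produce $u,v,p,q$ with $p=F(x,y,u,v,\bar p)$, $\bar q=G(x,y,u,v,q)$, match against the structure equations \eqref{streq} to obtain \eqref{pde1}--\eqref{dropFG}, and invoke normality for the nondegeneracy claims. The one point you flagged resolves in your favor: the computation gives $A_1=F_p$ and $A_2=G_q$ (no reciprocal), so $A_1A_2-1\ne 0$ is exactly $F_pG_q-1\ne 0$.
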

\begin{proof}
Take a local section of $\P$ such that $\uw^1$ and
$\uw^3$ span the integrable subsystems of $W_1$ and $W_2$, respectively.  Using the $G$-action,
we can modify the section (specifically, by multiplying $\uw^1,\uw^3$ by suitable scaling functions)
so that $\uw^1=dx$ and $\uw^3=dy$ for some locally defined functions $x, y$ on $\B$.
The structure equations \eqref{streq} imply that $\{\utheta, dx, dy\}$ is a Frobenius system; therefore, there are a locally defined functions $u, p_1, q_1$ on $\B$ such that
\[\utheta = \mu( du -p_1 \,dx - q_1 \,dy)\]
for some nonvanishing multiple $\mu$. Moreover, because $\utheta$ has Pfaff rank 5, the functions $u,x,y,p_1, q_1$ must have linearly independent differentials.  Similarly,
there must exist locally defined functions $v,p_2, q_2$ such that
\[ \uthetabar = \other{\mu}( dv - p_2 \,dx - q_2 \,dy)\]
for some nonvanishing multiple $\other{\mu}$.  Using the $G$-action, we can modify the
section by scaling so that $\mu=\other{\mu}=1$.

The structure equations imply that $d\utheta \equiv A_1 dx \& \uw^2+ dy \& \uw^4$ modulo $\utheta$.
Substituting $\theta = du - p_1 \,dx - q_1 \,dy$ into this equation shows  that we must have
\begin{align}
A_1 \uw^2 &= dp_1 - s_1 \,dx - f \,dy - r_3 \utheta, \label{A1w2}\\
    \uw^4 &= dq_1 - f \,dx - s_2 \,dy - r_2 \utheta	\label{uw4} \\
\intertext{for some functions $f,r_2,r_3,s_1,s_2$.  Similarly, substituting $\thetabar = dv - p_2 \,dx - q_2 \,dy$ into the equation
 $d\uthetabar \equiv dx \& \uw^2+ A_2 dy \& \uw^4$ mod $\thetabar$ shows that}
    \uw^2 &= dp_2 - s_3 \,dx - g\,dy - r_1 \uthetabar, \label{uw2} \\
A_2 \uw^4 &= dq_2 -g \,dx - s_4 \,dy - r_4 \uthetabar  \label{A2w4}
\end{align}
for some functions $g,r_1,r_4,s_3,s_4$.
We may use the remaining freedom in the $G$-action to modify the section, by adding multiples of $dx$ and $dy$ to
$\uw^2$ and $\uw^4$, respectively, to arrange that $s_2=0$ and $s_3=0$.

Comparing \eqref{A1w2} and \eqref{uw2} shows that the functions $p_1, p_2, x, y, u,v$ are functionally
dependent, and comparing \eqref{A2w4} and \eqref{uw4} shows that the functions $q_1, q_2, x, y, u, v$ are
functionally dependent; on the other hand, linear independence of
the forms $\utheta, \uthetabar, dx, dy, \uw^2$ and $\uw^4$ shows that
$x,y,u,v,p_2$ and $q_1$ are a local coordinate system on $\B^6$.
Thus, locally there exist functions $F$ and $G$ such that
$$p_1 = F(x,y,u,v,p_2)\qquad q_2 = G(x,y,u,v,q_1).$$
Substituting these equations, together with equations \eqref{uw4}, \eqref{uw2}, and the expressions for $\utheta$ and $\uthetabar$ into equations \eqref{A1w2} and \eqref{A2w4} yields
\begin{equation}\label{eqs-to-compare}
\begin{aligned}
A_1(dp_2 - g\,dy - r_1 (dv - p_2\,dx - G\,dy)) &= dF - s_1\,dx - f\,dy - r_3(du -F\,dx - q_1\,dy)\\
A_2(dq_1 - f\,dx - r_2(du-F\,dx - q_1\,dy)) &= dG - g\,dx - s_4\, dy - r_4(dv -p_2 \,dx - G\,dy)).
\end{aligned}
\end{equation}
For convenience, set $p = p_2$ and $q = q_1$.  Expanding $dF, dG$ and equating the coefficients of the differentials of the coordinates in equations \eqref{eqs-to-compare} yields
$$
\begin{aligned}
A_1 &= F_p, 	\quad & r_1 &= -F_v/F_p, &\quad r_3 &= F_u, \quad & f - F_p g &= F_y + F_u q + F_v G,\\
A_2 &= G_q,	& r_2 &= -G_u/G_q, & r_4 &=G_v, & g - G_q f, &= G_x + G_u F + G_v p.
\end{aligned}
$$
This establishes equations \eqref{solvefg1} and \eqref{solvefg2} in the statement of the proposition,
as well as the form of the coefficients of $dx$ in $\uw^2$ and $dy$ in $\uw^4$ in \eqref{wavecof}.

The structure equations \eqref{streq} imply that
$$d\hw^4 \& \hw^3 \& \hw^4 \& \htheta = \dfrac{C_4}{2 A_1} d\htheta \& d\htheta \& \htheta.$$
Pulling this condition back to our particular section gives
$$-df \& dx \& dy \& dq \& du = \dfrac{C_4}{A_1} dF \& dx \& dq \& dy \& du.$$
The local invariants for this section satisfy $C_1 = C_3 = 0$; therefore, our assumption that $W_1, W_2$
are not completely integrable implies that the functions $C_2, C_4$ are nonzero.
Thus it follows from this equation that $df \& dF \equiv 0$ modulo $dx, dy, du$ and $dq$.
This establishes the first equation on the last line
\eqref{dropFG} in the proposition. Moreover, it implies that $f$ may be regarded as a function of the
variables $x,y,u, p_1,q_1$; hence $f$ is locally a well-defined function on $\M$.  Similarly, the second equation in \eqref{dropFG} may be derived from the condition
$$d\hw^2 \& \hw^1 \& \hw^2 \& \hthetabar = \dfrac{C_2}{2 A_2} d\hthetabar\& d\hthetabar \& \hthetabar,$$
and $g$ may be regarded as a function of the variables $x,y,v, p_2,q_2$; hence $g$ is locally a well-defined function on $\Mbar$.

Since we have $A_1 = F_p, A_2 = G_q$, the condition that $\B$ is a normal \BT/ implies that $F_p, G_q \neq 0$ and $F_p G_q - 1 \neq 0$ at every point of $\B$.  Any integral
surface of $\J$ on which $dx \& dy \neq 0$ is given by specifying $u,v,p,q$ as functions of $x$ and $y$
that satisfy the conditions $p=v_x$, $q=u_y$, and
\begin{equation}\label{waveBT}
u_x = F(x,y,u,v,p), \qquad v_y = G(x,y,u,v,q).
\end{equation}
By taking total derivatives of these equations with respect to $y$ and $x$ respectively, it follows that
$u$ and $v$ satisfy the wavelike PDEs \eqref{nonlinear-wave-eqs},
where $f,g$ are the functions determined by equations \eqref{solvefg1},\eqref{solvefg2}.

\end{proof}

\subsection{Quasilinear wavelike \BT/s}
The \BT/ \eqref{sgbt} for the sine-Gordon equation is defined by {\em quasilinear} PDEs---i.e.,
PDEs that are linear in the partial derivatives $u_x, u_y, v_x, v_y$.  For the general case of a
wavelike \BT/ between two wavelike PDEs, this condition may be expressed in terms of the local
coordinates given by Proposition \ref{waveprop} as the condition that the functions $F, G$ are
linear in the variables $p,q$, respectively.  In this case, the invariants associated to the
section \eqref{wavecof} of $\P$ have the property that the functions $A_1 = F_p$ and $A_2 = G_q$
are functions of the variables $x,y,u,v$ alone.  It follows that the product $A_1 A_2$,
which is well-defined on $\B$ {\em independent} of the choice of section of $\P$, is also a function of the variables $x,y,u,v$ alone.

This condition can be expressed geometrically as follows: any wavelike \BT/ $\B$ is endowed with a well-defined rank 4 Pfaffian system $K$ that is the direct
sum of $L$, $\Lbar$ and the integrable subsystems of $W_1$ and $W_2$; moreover, this system is Frobenius.  (In terms of the local coordinates given
by Proposition \ref{waveprop}, $K$ is spanned by $\{dx, dy, du, dv\}$.)

\begin{definition}\label{quasilinear-BT-def}
A wavelike \BT/ between two wavelike hyperbolic Monge-Amp\`ere equations will be called {\em quasilinear}
if the function $A_1 A_2$ is constant along the integral submanifolds of $K$ (or equivalently, if $d(A_1 A_2) \in \Gamma(K)$).
\end{definition}

In terms of the local normal form \eqref{waveBT} given by Proposition \ref{waveprop}, this condition is easily seen to be equivalent to the condition that the functions $F$ and $G$
are linear in the variables $p$ and $q$, respectively.
Thus, in the quasilinear case we may set
\begin{equation}\label{FGlinear}
F = F_0 + F_1 p, \qquad G=G_0 + G_1 q
\end{equation}
where $F_i$ and $G_i$ are functions of the variables $x,y,u,v$, and the quantities $F_1, G_1$ and $1-F_1 G_1$ are nonzero
at each point.  Then the equations \eqref{waveBT} defining the \BT/ become:
\begin{equation}\label{quasiBT}
u_x = F_1 v_x + F_0, \qquad v_y = G_1 u_y + G_0.
\end{equation}

The following proposition shows that in the quasilinear case, the local coordinates of
Proposition \ref{waveprop} may be refined in such a way that the functions $f, g$ become linear
with respect to the variables $p,q$.  Consequently, the PDEs \eqref{nonlinear-wave-eqs} underlying
a quasilinear \BT/ \eqref{FGlinear} may both be assumed to be quasilinear as well.

\begin{prop}\label{quasiFFGG}
Let $\P \searrow \B$ define a quasilinear wavelike normal \BT/.  Near any point on $\B$
there exist local coordinates $x,y,u,v,p,q$ and functions $f,g,F,G$ satisfying the conditions of Proposition \ref{waveprop}, together with the
additional conditions that $F$ is linear with respect to $p$, $G$ is linear with respect to $q$,
and the functions $f, g$ are linear with respect to the variables $p,q$.  (In particular, $f$ and $g$ contain no terms involving the product $pq$.)
\end{prop}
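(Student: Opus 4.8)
The starting point is the normal form from Proposition~\ref{waveprop}: we already have local coordinates $x,y,u,v,p,q$ with a section \eqref{wavecof} of $\P$, and in the quasilinear case \eqref{FGlinear} gives $F = F^0 + F^1 p$ and $G = G^0 + G^1 q$ with $F^i,G^i$ depending only on $x,y,u,v$. The goal is to further adjust the coordinates so that $f$ and $g$ also become affine in $(p,q)$, with no $pq$ term. I would first examine what the equations \eqref{solvefg1}, \eqref{solvefg2} say once $F,G$ are linear. Solving the linear system for $f$ and $g$ expresses them as rational combinations of $F_y, F_u, F_v, G_x, G_u, G_v$, the constants $F^1, G^1$, and the variables $p,q$; a priori this produces a $pq$ term coming from the product $F_v G = F_v(G^0+G^1 q)$ fed back through the $G_u F$ term, etc. The first task is therefore a bookkeeping computation to pin down exactly which coordinate changes can kill the offending quadratic term.

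The key observation I would exploit is that Proposition~\ref{waveprop} still leaves gauge freedom: we may change the fiber coordinates $p,q$ and the base coordinates $u,v$ (preserving $x,y$ and the integrable subsystems $dx,dy$) by any transformation that respects the Frobenius system $K = \{dx,dy,du,dv\}$ and the structure of \eqref{wavecof}. Concretely, because \eqref{waveBT} reads $u_x = F$, $v_y = G$ and $p = v_x$, $q = u_y$ are the ``true'' derivatives, a change of dependent variable $u \mapsto \tilde u(x,y,u)$, $v \mapsto \tilde v(x,y,v)$ rescales $p,q$ and shifts $f,g$ by lower-order terms; this is the same mechanism by which, in the classical theory of a single wavelike PDE $u_{xy}=f(x,y,u,u_x,u_y)$, one normalizes the $u_x$- and $u_y$-dependence. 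I would set up the transformation law for $f$ under $\tilde u = \phi(x,y,u)$ (so $\tilde u_x = \phi_u u_x + \phi_x$, hence $\tilde p$-dependence is linear in the old $p$), and similarly for $g$, and show that the coefficient of $pq$ in the new $f$ (resp.\ $g$) is a linear first-order quantity in $\phi$ (resp.\ $\psi$) that can always be annihilated by choosing $\phi,\psi$ appropriately — an ODE along the $u$- (resp.\ $v$-) direction with parameters $x,y$. The nonvanishing of $F^1, G^1$ and $F^1G^1-1$ from Proposition~\ref{waveprop} guarantees the relevant coefficients in that ODE are nonsingular, so the adjustment is well-defined near the given point.

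Having killed the $pq$ term, I would then check that no new obstruction to affineness has been introduced: once the $pq$ coefficient vanishes, equations \eqref{solvefg1}, \eqref{solvefg2}, being linear in $f,g$ with coefficients $F_p = F^1$, $G_q = G^1$ independent of $p,q$, force the remaining $(p,q)$-dependence of $f,g$ to be at most affine, since the right-hand sides $F_y + qF_u + FG_v$-type expressions are then affine in $(p,q)$ and the coefficient matrix $\bigl(\begin{smallmatrix}1 & -F^1\\ -G^1 & 1\end{smallmatrix}\bigr)$ is invertible with constant-in-$(p,q)$ entries. Finally I would verify that the extra normalization is compatible with all the conditions already secured in Proposition~\ref{waveprop} — that \eqref{pde1} and \eqref{dropFG} survive (they are statements about $F,G$, which remain linear since the base change is affine in $p,q$, and $f$ being a function on $\M$ alone is preserved) — and that the new coframing is still a section of $\P$, i.e.\ still satisfies \eqref{some-d-conds}, which follows because the adjustments were made precisely within the residual gauge group of \eqref{streq}.

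The main obstacle I anticipate is not conceptual but organizational: tracking the $pq$-coefficient of $f$ through the solution of \eqref{solvefg1}--\eqref{solvefg2} and through the dependent-variable change, and confirming that the resulting scalar ODE for $\phi$ (and $\psi$) is genuinely first-order and nondegenerate rather than, say, an overdetermined system. The quasilinearity hypothesis, encoded as $d(A_1A_2)\in K$ in Definition~\ref{quasilinear-BT-def}, is exactly what makes $F_p,G_q$ — and hence the coefficient of $p$ in $f$ and of $q$ in $g$ after solving — functions on $K$, which is what allows the $pq$ term to depend only on the ``slow'' variables and be removed by a $K$-respecting change; I would make sure this is used explicitly at the point where the ODE for $\phi$ is shown to have coefficients independent of $p,q$.
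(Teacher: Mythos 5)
Your proposal is correct and follows essentially the same route as the paper: solve the linear system \eqref{solvefg1}--\eqref{solvefg2} to see that $f,g$ are affine in $1,p,q,pq$, then use the residual point transformations $U=\varphi(x,y,u)$, $V=\psi(x,y,v)$ (which preserve the quasilinear wavelike form \eqref{quasiBT}) to kill the $pq$ coefficient via a first-order ODE in $u$ (resp.\ $v$), relying on \eqref{dropFG} to know that this coefficient depends only on $x,y,u$ (resp.\ $x,y,v$). This matches the paper's argument, which normalizes the underlying PDE $u_{xy}=Au_xu_y+Bu_x+Cu_y+D$ by solving $\varphi_{uu}+A\varphi_u=0$.
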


\begin{proof}  Let $x,y,u,v,p,q$ be the local coordinates provided by Proposition \ref{waveprop}.
As noted above, the assumption that $\B$ is quasilinear implies that $F,G$ have the form \eqref{FGlinear}.
Substituting these expressions into equations \eqref{solvefg1}, \eqref{solvefg2} yields the following system of equations for $f,g$:

\[ \begin{bmatrix} 1 & -F_1 \\ -G_1 &1\end{bmatrix} \begin{bmatrix} f \\ g \end{bmatrix}= \begin{bmatrix}
F_{0,y}+G_0 F_{0,v}  & F_{1,y}+G_0 F_{1,v}  & F_{0,u} + G_1 F_{0,v} & F_{1,u} + G_1 F_{1,v} \\[5pt]
G_{0,x}+F_0 G_{0,u} & G_{0,v} + F_1 G_{0,u} & G_{1,x}+F_0 G_{1,u} & G_{1,v} + F_1 G_{1,u}
\end{bmatrix}
\begin{bmatrix} 1 \\ p \\ q \\ pq \end{bmatrix}. \]
Therefore, $f$ and $g$ are given by:
\begin{equation}\label{expandpq}
\begin{bmatrix} f \\ g \end{bmatrix}=
\dfrac1\Delta\begin{bmatrix} 1 & F_1 \\ G_1 &1\end{bmatrix}
\begin{bmatrix}
F_{0,y}+G_0 F_{0,v}  & F_{1,y}+G_0 F_{1,v}  & F_{0,u} + G_1 F_{0,v} & F_{1,u} + G_1 F_{1,v} \\[5pt]
G_{0,x}+F_0 G_{0,u} & G_{0,v} + F_1 G_{0,u} & G_{1,x}+F_0 G_{1,u} & G_{1,v} + F_1 G_{1,u}
\end{bmatrix}
\begin{bmatrix} 1 \\ p \\ q \\ pq \end{bmatrix},
\end{equation}
where $\Delta = 1 - F_1 G_1$.

As is evident from \eqref{expandpq} and the linearity relations \eqref{FGlinear}, such transformations link solutions of hyperbolic equations of the form
\begin{equation}\label{unonlin}
u_{xy} = A u_x u_y +B u_x +  C u_y + D
\end{equation}
where $A, B, C, D$ are functions of $x,y,u$.
Moreover, the form of such equations, along with the form \eqref{quasiBT} for the
transformation, is invariant under point transformations defined by
$U=\varphi(u,x,y)$.
In order to complete the proof, it remains to show that we can use such changes of variable to eliminate the
first-order nonlinearity in the right-hand side of \eqref{unonlin}.

Under a change of variable $U=\varphi(x,y,u)$, the PDE \eqref{unonlin} is transformed to the PDE
$$U_{xy} = \widetilde{A} U_x U_y + \widetilde{B} U_x + \widetilde{C} U_y + \widetilde{D}$$
for the function $U(x,y)$,
where, in particular,
$$\widetilde{A} = \dfrac{\varphi_{uu} + A \varphi_u}{\varphi_u^2}.$$
By choosing $\varphi(x,y,u)$ so that it satisfies the first-order PDE
$$\varphi_u = \int e^{-A(x,y,u)} \,du,$$
we can arrange that $\varphi_{uu} + A\varphi_u=0$; thus, the PDE satisfied by $U$
has no nonlinear first-order term.  Similarly, we can simultaneously make a change of variable $V=\psi(x,y,v)$,
to arrange that the PDE satisfied by $V$ has no nonlinear first-order term. When we do so, the quasilinear wavelike form \eqref{quasiBT}
of the \BT/ is unchanged.  (However, the coefficients in \eqref{quasiBT} are altered; for example,
$F_1$ is replaced by $\frac{\varphi_u F_1}{\psi_v}$ and $G_1$ is replaced by $\frac{\psi_v G_1}{\varphi_u}$.)
Now re-labeling $U$ as $u$ and $V$ as $v$ gives the desired local coordinates.
\end{proof}

\subsection{Autonomous wavelike \BT/s}
A PDE is called {\em autonomous} if it contains no explicit dependence on the independent variables $x,y$.
Thus, the wavelike \BT/ \eqref{waveBT} is autonomous if $F$ and $G$ are functions of the variables $u,v,p,q$ alone,
with no dependence on the variables $x,y$.  In this case, we can see from equations \eqref{solvefg1}, \eqref{solvefg2}
that the functions $f,g$ would also be independent of the variables $x,y$, and hence the PDEs underlying the \BT/ would have the form
\[ u_{xy} = f(u, u_x, u_y), \qquad v_{xy} = g(v, v_x, v_y). \]

Geometrically, the condition that the system is autonomous is represented by the presence of a
2-dimensional Abelian symmetry group of the exterior differential system $\J$ on $\B$---namely,
the group of simultaneous translations in the independent variables $x,y$.  Specifically,
if $\J$ represents an autonomous \BT/, then for any real numbers $a,b$, the diffeomorphism $\phi:\B \to \B$ defined by
\[ \phi(x,y,u,v,p,q) = (x+a, y+b, u,v,p,q) \]
has the property that $\phi^*\J = \J$.

It is generally more convenient to work with the {\em infinitesimal symmetries} of $\J$.  These are the vector fields on $\B$ that generate the
symmetry group; in the case of the translation symmetry group above, the infinitesimal symmetries are generated by the two commuting vector fields
\[ X = \frac{\partial}{\partial x}, \qquad Y = \frac{\partial}{\partial y}. \]

In general, we have the following definition:

\begin{definition}\label{symmetry-def}
A vector field $X$ on a manifold $\B$ is an {\em infinitesimal symmetry} of the exterior differential system $\J$ on $\B$
if for every differential form $\Phi \in \J$, the Lie derivative $\Lie_X \Phi$ is contained in $\J$.
\end{definition}

Since the $G$-structure $\P \searrow \B$ is canonically associated to the exterior differential system
$\J$ on $\B$, any 
symmetry $\phi$
of $(\B, \J)$ must also be 
a symmetry
of $\P$, and vice-versa.
In other words, for any local section $\unter\vomega$ of $\P$,
$\phi^* \vomega$
must also be a local section of $\P$.
In particular,
if $X$ is an infinitesimal symmetry then
the Lie derivative $\Lie_X$ must preserve the splitting \eqref{Gsplit} of the cotangent bundle $T^*\B$.
Furthermore, if the \BT/ is wavelike, then $\Lie_X$ must also preserve
the 1-dimensional integrable subsystems of $W_1$ and $W_2$.

The following proposition shows that in the wavelike case, the presence of a pair of
commuting infinitesimal symmetries characterizes the autonomous examples; more precisely,
if a wavelike \BT/ has two linearly independent, commuting infinitesimal symmetries
(subject to a transversality condition which will be made precise below),
then there exist local coordinates with respect to which the PDEs defining the \BT/ are autonomous.

\begin{prop}\label{wavelikesymm}
 Let $\P\searrow \B$ define a normal wavelike \BT/.  Let $X$ and $Y$ be pointwise linearly independent,
commuting vector fields on $\B$ which are infinitesimal symmetries of $\P$ and are
{\em transverse} to the integrable subsystems of $W_1$ and $W_2$; i.e., if
$\uw^1$ and $\uw^3$ are local sections of these integrable subsystems, then
\begin{equation}\label{transverse}
\det\begin{bmatrix} \w^1(X) & \w^1(Y) \\ \w^3(X) & \w^3(Y) \end{bmatrix} \ne 0
\end{equation}
at each point of $\B$.  Then near any point in $\B$ there exist local coordinates $x,y,u,v,p,q$
and functions $F,G,f,g$ satisfying the conditions of Proposition \ref{waveprop}, with the additional property that $F,G,f,g$ are independent of $x$ and $y$.
\end{prop}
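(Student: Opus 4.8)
The plan is to start from the local coordinates $x,y,u,v,p,q$ and the section \eqref{wavecof} of $\P$ provided by Proposition \ref{waveprop}, and then use the two commuting symmetries $X,Y$ to straighten things out. First I would observe that, because $X$ and $Y$ are infinitesimal symmetries of $\P$, the Lie derivatives $\Lie_X$ and $\Lie_Y$ preserve the splitting \eqref{Gsplit}, and (since the \BT/ is wavelike) they preserve the rank-one integrable subsystems of $W_1$ and $W_2$; these are spanned by $\uw^1 = dx$ and $\uw^3 = dy$ in the coordinates of Proposition \ref{waveprop}. The transversality hypothesis \eqref{transverse} says precisely that the projections of $X$ and $Y$ to the leaf space of the Frobenius system $\{dx,dy\}$ are linearly independent. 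After a constant linear change of the pair $(X,Y)$ I would arrange $\Lie_X\,dx = dx(\text{const})$ etc.; more carefully, since $\Lie_X$ preserves the line bundle $\{dx\}$ we have $\Lie_X\,dx = \rho\,dx$ for some function $\rho$, and $d(\rho\,dx)=0$ forces $\rho=\rho(x)$, so $\Lie_X\,dx = \rho(x)\,dx$, and similarly for the other three combinations. This lets me solve an uncoupled pair of ODEs to choose the functions $x,y$ (replacing them by $\phi(x),\psi(y)$, which is allowed by the normalization in Proposition \ref{waveprop}) so that $\Lie_X\,dx = dx$, $\Lie_X\,dy = 0$, $\Lie_Y\,dx = 0$, $\Lie_Y\,dy = dy$ — i.e.\ $X\equiv \partial/\partial x$ and $Y\equiv\partial/\partial y$ modulo vector fields annihilated by $dx$ and $dy$.

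Next I would promote this to the full coordinate system. The Frobenius system $K=\{dx,dy,du,dv\}$ (the rank-4 system discussed before Definition \ref{quasilinear-BT-def}) is preserved by $\Lie_X$ and $\Lie_Y$ because it is built canonically from $L,\Lbar$ and the integrable subsystems of $W_1,W_2$. Arguing as above on the 2-dimensional leaf space of $K$ — on which $x,y$ already descend and on which $X,Y$ act as $\partial_x,\partial_y$ up to terms tangent to the leaves — I can adjust the remaining leaf coordinates, which play the role of $u$ and $v$, by leaf-wise diffeomorphisms so that in addition $Xu=Xv=Yu=Yv=0$, i.e.\ $X = \partial_x + (\text{vertical for }K)$ and $Y=\partial_y+(\text{vertical for }K)$. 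Since $u,v$ can always be modified by point transformations $u\mapsto\varphi(x,y,u)$, $v\mapsto\psi(x,y,v)$ of the type used in the proof of Proposition \ref{quasiFFGG} without disturbing the structure of \eqref{wavecof} or the defining equations \eqref{nonlinear-wave-eqs}, \eqref{wavelike-BT}, this is legitimate. Then I would re-run the coordinate construction inside the proof of Proposition \ref{waveprop}: the forms $\utheta = du - F\,dx - q\,dy$, $\uthetabar = dv - p\,dx - G\,dy$ and $\uw^2,\uw^4$ are determined by $\P$ up to exactly the residual $G$-freedom already used up, so $\Lie_X$ and $\Lie_Y$ must fix them; applying $\Lie_X$ to $\utheta$ and using $Xx=1$, $Xy=Xu=0$, $Xq = (\uw^4 + \cdots)(X)$ together with $\Lie_X\,q\,dy$ contributions, one reads off $XF=0$, and similarly $XG=0$, then $YF=YG=0$. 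Finally, $F_q=G_p=0$ from \eqref{pde1} and the $x,y$-independence of $F,G$ feed into \eqref{solvefg1}, \eqref{solvefg2} to give $Xf=Xg=Yf=Yg=0$, so all four functions $f,g,F,G$ are independent of $x$ and $y$, as claimed.

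The main obstacle I anticipate is the middle step: showing that one can simultaneously kill the $X,Y$-derivatives of the fibre coordinates $u,v$ (and, implicitly, $p,q$) while keeping the normalized section \eqref{wavecof}. The subtlety is bookkeeping which pieces of the $G$-action have already been spent: in the proof of Proposition \ref{waveprop} the group freedom was used to set $\mu=\bar\mu=1$ and $s_2=s_3=0$, so the section is rigid except for the freedom to rescale $dx,dy$ and shift $u,v$ by leaf-preserving maps — and I need to check that these are exactly the moves needed, no more and no less, so that imposing the symmetry normalizations is consistent. A clean way to handle this is to phrase the whole argument on the leaf space of $K$: $X$ and $Y$ descend there (since $\Lie_X,\Lie_Y$ preserve $K$), give two commuting transverse vector fields on a 4-manifold with distinguished coordinates $x,y$ and two transverse foliations $\{u=\text{const}\}$, $\{v=\text{const}\}$, and the classical "commuting vector fields can be simultaneously rectified" argument (flow out from a transversal) produces coordinates in which $X=\partial_x$, $Y=\partial_y$ exactly; pulling these back up and re-deriving \eqref{wavecof} then makes the $x,y$-independence of $F,G,f,g$ immediate. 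The remaining computations are the same coefficient-matching already carried out in Proposition \ref{waveprop} and are routine.
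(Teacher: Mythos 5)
Your proposal reaches the right conclusion but by a genuinely different route from the paper. The paper starts at the other end: since $X,Y$ commute and are pointwise independent, it first produces four functions $u,v,p,q$ whose differentials annihilate both vector fields (joint first integrals of the foliation by $X,Y$-orbits), then normalizes $x,y$ so that $X=\di/\di x$ and $Y=\di/\di y$ exactly, and only afterwards reconciles these coordinates with the $G$-structure: the symmetry condition $\Lie_X\theta_0\equiv\Lie_Y\theta_0\equiv 0\bmod\theta_0$ forces the existence of a section of $L$ with $x,y$-independent coefficients, and a projection/Pfaff-theorem argument applied to the ``vertical'' part of that section recovers the normal form $du-F\,dx-q\,dy$ with $F$ free of $x,y$. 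You instead keep the coordinates of Proposition \ref{waveprop} and rectify $X,Y$ on the four-dimensional leaf space of $K=\{dx,dy,du,dv\}$, adjusting $u$ and $v$ by point transformations that respect the two codimension-one foliations, and then read off $XF=XG=0$ from $\Lie_X\utheta\equiv 0\bmod\utheta$; comparing $du$-coefficients forces the proportionality factor to vanish, which also yields $Xq=Xp=0$ for free. This is viable and avoids the paper's $\pi_1/\pi_2$ splitting and integrability computation; the cost is that you must actually produce the rectifying functions $\varphi(x,y,u)$, $\psi(x,y,v)$, which works because $X,Y$ preserve the Frobenius systems $\{dx,dy,du\}$ and $\{dx,dy,dv\}$ and hence descend to commuting fields on the corresponding three-dimensional quotients, where simultaneous rectification applies.

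The one step you genuinely gloss over is the normalization $X(y)=Y(x)=0$. A constant linear change of $(X,Y)$ only arranges this at a single point, and since $Y\intprod dx$ is a function of $x$ alone, replacing $x$ by $\phi(x)$ multiplies it by a nonvanishing factor and cannot kill it unless it already vanishes identically. The paper establishes the identical vanishing by computing $\tilde{X}(M_{12})=M_{11}'(x)\,M_{12}$ from $[\tilde{X},\tilde{Y}]=0$ and invoking uniqueness for ODEs; your sketch needs this argument (or an equivalent) inserted before the reparametrizations $\tilde{x}(x),\tilde{y}(y)$ make sense. Minor slips worth correcting: the intended normalization is $\Lie_X\,dx=0$ (equivalently $X\intprod dx=1$), not $\Lie_X\,dx=dx$; the leaves of $K$ are two-dimensional, so its leaf space is four-dimensional, not two-dimensional; and a symmetry only gives $\Lie_X\utheta\equiv 0\bmod\utheta$ a priori, not $\Lie_X\utheta=0$, though as noted the multiple is forced to vanish once $Xu=0$ and $Xx=1$.
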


\begin{remark}
This result is not as obvious as it may seem.  Certainly, the existence of commuting symmetry vector fields easily implies the existence of local coordinates $x,y,u,v,p,q$ in which the \BT/ may be expressed as a system of PDEs that have no explicit dependence on the variables $x,y$.  But showing that these coordinates may be chosen consistently with the conditions of Proposition \ref{waveprop} requires a bit more care than might be expected at first glance.
\end{remark}

\begin{proof}
Because the vector fields $X$ and $Y$ commute, they are tangent to a local foliation of $\B$ with two-dimensional leaves.
Thus, any point in $\B$ has a neighborhood on which there exist functions $u,v,p,q$ whose differentials are linearly independent and annihilate $X$ and $Y$, i.e.,
\[ \{du, dv, dp, dq\}^\perp = \{X, Y\}. \]
Moreover, the span of $\{du, dv, dp, dq\} \subset T^*\B$ is uniquely determined by this condition.

Let $x,y$ be locally defined functions, possibly defined on a smaller neighborhood of the given point,
such that $dx$ and $dy$ span the integrable
subsystems of $W_1$ and $W_2$, respectively.  Then by our hypothesis \eqref{transverse} the functions
$x,y,u,v,p,q$ form a local coordinate system, and
\[ \text{span} \left\{X, Y \right\} = \text{span} \left\{\dfrac{\di}{\di x}, \dfrac{\di}{\di y} \right\} \]
at each point.

First, we will show that we can replace $X,Y$ by {\em constant coefficient} linear combinations
\begin{equation}
 \tilde{X} = aX + bY, \qquad \tilde{Y} = cX + dY \label{XY-new-basis}
\end{equation}
(so that $\tilde{X}, \tilde{Y}$ commute and are infinitesimal symmetries of $\B$ as well) and $x,y$ by functions $\tilde{x}(x), \tilde{y}(y)$ to arrange that
\[ \tilde{X} = \dfrac{\di}{\di \tilde{x}}, \qquad \tilde{Y} = \dfrac{\di}{\di \tilde{y}}. \]
To this end, note that if $Z$ is any symmetry vector field of $(\B, \J)$, then
\[ \Lie_Z dx =d(Z\intprod dx)\equiv 0 \mod dx, \]
and similarly, $d(Z\intprod dy) \equiv 0$ mod $dy$.  In other words, the function $Z \intprod dx$ is a function of $x$ alone, and $Z \intprod dy$ is a function of $y$ alone.
Thus, if we let $M$ be the matrix
$$M = \begin{bmatrix} X\intprod dx & Y\intprod dx \\ X\intprod dy & Y\intprod dy \end{bmatrix},$$
then the top row entries $M_{11}, M_{12}$ are functions of $x$ alone, while the bottom row entries $M_{21}, M_{22}$ are functions of $y$ alone.
Moreover, by replacing $X,Y$ by constant-coefficient linear combinations $\tilde{X},\tilde{Y}$ as in \eqref{XY-new-basis}, we can
assume that $M_{12}$ and $M_{21}$ vanish at the given point of $\B$.  It follows that $M_{12}$ also vanishes on the hypersurface through the given point where
$x$ is constant, while $M_{21}$ vanishes on the hypersurface through the given point where $y$ is constant.
But we also have
\begin{align*}
\tilde{X}(M_{12}) & = \tilde{X}(\tilde{Y} \intprod dx) = \tilde{X}(\tilde{Y}(x)) & &\text{(by definition)} \\
& = \tilde{Y}(\tilde{X}(x)) &&\text{(because $\tilde{X}, \tilde{Y}$ commute)} \\
& = \tilde{Y}(M_{11})  = M_{11}'(x) \tilde{Y}(x)&& \text{(by the chain rule)} \\
& = M_{11}'(x) M_{12}.
\end{align*}
So, by the local uniqueness theorem for ODEs, $M_{12}$ vanishes identically in a neighborhood of the given point,
and a similar argument shows that the same is true for $M_{21}$.  By solving the ODE
\[ \frac{d\tilde{x}}{dx} = \frac{1}{M_{11}(x)} \]
we obtain a function $\tilde{x}$ of $x$ satisfying $\tilde{X}(\tilde{x})=1$.
Similarly, we obtain a function $\tilde{y}$ of $y$ such that $\tilde{Y}(\tilde{y})=1$.  Dropping
the tildes, we obtain a local coordinate system $x,y,u,v,p,q$ such that
$$X = \dfrac{\di}{\di x}, \qquad Y=\dfrac{\di}{\di y},$$
as claimed.

Next, we will show that we can choose a nonvanishing local section $\theta_1$ of $L$ of the form
\begin{equation*}
\theta_1 = A \, du + B \, dv + C\,dp + D\,dq + R \,dx + S\,dy,
\end{equation*}
where $A,B,C,D,R,S$ are locally defined functions on $\B$ that are independent of the variables $x,y$.
Start by choosing any nonvanishing local section  $\theta_0$ of $L$.
(Here and in what follows we will shrink our local coordinate neighborhood around the given point as necessary.)
Because $\{\theta_0,dx,dy\}$ is a Frobenius system, there exist locally defined functions $U,P,Q$ such that, up to a scalar multiple,
\begin{align}
 \theta_0 & = dU - P\,dx - Q\,dy  \label{firstheta}\\
 & = U_{u}\, du + U_v\, dv + U_{p}\, dp + U_{q}\, dq + (U_{x} - P)\, dx + (U_{y} - Q)\, dy . \notag
\end{align}
Because $X$ and $Y$ are symmetries of the system, we must have
\[ \Lie_X \theta_0 \equiv \Lie_Y \theta_0 \equiv 0 \mod{\theta_0}. \]
Direct computation using equation \eqref{firstheta} shows that
\begin{align*}
\Lie_X \theta_0 & = U_{xu}\, du + U_{xv}\, dv + U_{xp}\, dp + U_{xq}\, dq + (U_{xx} - P_x)\, dx + (U_{xy} - Q_x)\, dy, \\
\Lie_Y \theta_0 & = U_{yu}\, du + U_{yv}\, dv + U_{yp}\, dp + U_{yq}\, dq + (U_{xy} - P_y)\, dx + (U_{yy} - Q_y)\, dy.
\end{align*}
Each of these must be a scalar multiple of $\theta_0$. Expanding $0= \theta_0 \wedge \Lie_X \theta_0$ in our coordinates
shows that the $x$-derivative of the ratio any two of the functions
\[ U_u, U_v, U_p, U_q, (U_x - P), (U_y - Q) \]
must be zero; similarly, these ratios must also be independent of $y$.
Then if, say, $U_u \neq 0$, we can write
\[ \theta_0 = U_u \left( du + \frac{U_v}{U_u} dv + \frac{U_p}{U_u} dp + \frac{U_q}{U_u} dq + \frac{(U_x-P)}{U_u} dx + \frac{(U_y-Q)}{U_u} dy \right) .\]
It follows that we may take $\theta_1 = e^{-\lambda} \theta_0$ where $e^\lambda = U_u$, so that
\begin{equation}\label{thetatilde}
\theta_1 = A \, du + B\, dv + C\,dp + D\,dq + R \,dx + S\,dy
\end{equation}
for functions some $A,B,C,D,R,S$ that are independent of the variables $x,y$, as claimed.
A similar argument shows that there exists a nonvanishing section $\thetabar_1$ of $\Lbar$ of the form
\[ \thetabar_1 = \other{A}\, du + \other{B} \, dv + \other{C}\,dp + \other{D}\,dq + \other{R} \,dx + \other{S}\,dy, \]
where the functions $\other{A}, \other{B}, \other{C}, \other{D}, \other{R}, \other{S}$ are independent of the variables $x,y$.

Finally, we will show that we can modify our local coordinates and
rescale the sections $\theta_1, \thetabar_1$ to arrive at sections $\theta, \thetabar$ of $L, \Lbar$, respectively, of the form
\begin{align*}
\theta &= du - F(u,v,p) \,dx - q \,dy,\\
            \thetabar&= dv - p \,dx - G(u,v,q)\, dy
\end{align*}
for some functions $F, G$ that are independent of the variables $x,y$.  It will follow that the \BT/ is given in terms of these local coordinates the the equations
\[ u_x = F(u,v,v_x), \qquad      v_y =G(u,v,u_y), \]
which will complete the proof of the Proposition.
To simplify the exposition, we introduce the following notations: let $I_1 \subset T^* \B$ and $I_2 \subset T^*\B$
be the complementary local sub-bundles spanned by $\{dx,dy\}$ and $\{du,dv,dp,dq\}$, respectively.
Let $\pi_1, \pi_2$ denote projections onto these sub-bundles.  There is an induced splitting of the local 2-forms on $\B$, namely
\begin{equation}\label{2-form-splitting}
\Lambda^2 (T^* \B) = \Lambda^2 I_1 \, \oplus \,(I_1 \wedge I_2) \, \oplus \, \Lambda^2 I_2.
\end{equation}
We similarly let $\pi_{11}$ and $\pi_{22}$ denote projections onto the first and last summands in equation \eqref{2-form-splitting}, respectively.

By equation \eqref{firstheta}, we have
\[ \pi_{22}(d\theta_0) = \pi_{22}(-dP \& dx - dQ \& dy) = 0. \]
Substituting $\theta_0 = e^{\lambda}\theta_1$ into this equation yields (after cancelling a factor of $e^\lambda$)
\begin{equation}\label{pied}
\pi_{22} \left(d\lambda \wedge \theta_1 + d\theta_1\right) = 0.
\end{equation}
Let $\phi = \pi_1(\theta_1) =  A \, du + B\, dv + C\,dp + D\,dq$ and $\psi = \pi_2(\theta_1) = R\,dx + S\,dy.$
Substituting $\theta_1 = \phi + \psi$ into equation \eqref{pied} yields
\begin{align*}
0 &= \pi_{22}\left( d\lambda\wedge\phi + d\lambda \wedge \psi + d\phi + d\psi\right) \\
&= \pi_2(d\lambda) \wedge \phi + d\phi,
\end{align*}
where in the second line we have used the facts that $\pi_2(\psi)=0$
(and hence $\pi_{22}(d\lambda \& \psi) = 0$), $\pi_{22}(d\psi)=0$, and $\pi_{22}(d\phi) = d\phi$
(which follows from the fact that $A,B,C,D$ are independent of $x,y$).
But this implies that the 1-form $\phi$ is integrable, and by the Pfaff Theorem, locally there must exist functions $\wt{U},\mu$
 such that
$$\phi = e^\mu d\wt{U}.$$
Moreover, since $\phi$ has no dependence on the variables $x,y$, we can assume that $\wt{U}, \mu$ have no dependence on $x,y$ as well.
Thus, we can define a nonvanishing local section $\theta$ of $L$ by
$$\theta = e^{-\mu} \theta_1 = d\wt{U} - \wt{P}_1\,dx - \wt{Q_1}\,dy,$$
where $\wt{P}_1= e^{-\mu}R$ and $\wt{Q}_1 = e^{-\mu}S$ are functions of the variables $u,v,p,q$, with no dependence on the variables $x,y$.
By a similar argument, there exist functions $\wt{V}$, $\wt{P}_2$ and $\wt{Q}_2$ of the variables $u,v,p,q$ such that
$$\thetabar = d\wt{V} - \wt{P}_2 \,dx - \wt{Q}_2 \,dy.$$
is a nonvanishing local section of $\Lbar$.

Now we re-label the coordinates $\wt{U}, \wt{V}, \wt{P}_2, \wt{Q}_1$ as $u,v,p,q$ respectively.
Then we have
\begin{align*}\theta &= du - \wt{P}_1(u,v,p,q) \,dx - q \,dy,\\
            \thetabar&= dv - p \,dx - \wt{Q}_2(u,v,p,q)\, dy.
\end{align*}
Set $F=\wt{P}_1$ and $G=\wt{Q}_2$.
As was shown in the proof of Proposition \ref{waveprop}, $F$ is in fact independent of $q$, and
$G$ is independent of $p$.  Thus, the
\BT/ is given by
\begin{align*}u_x &= F(u,v,v_x),\\
            v_y &=G(u,v,u_y),
\end{align*}
where there is no explicit dependence on $x$ or $y$ in the right-hand sides, as claimed.
As noted previously, it follows that the right-hand sides $f,g$ of the \MA/ equations \eqref{nonlinear-wave-eqs}
are also independent of $x$ and $y$.
\end{proof}

In the quasilinear case, we have the following corollary:

\begin{cor}
Let $\P \searrow \B$ define a quasilinear wavelike \BT/, and assume that there exist vector fields $X,Y$
on $\B$ that satisfy the conditions of Proposition \ref{wavelikesymm}.  Then near any point
of $\B$ there are coordinates $x,y,u,v,p,q$ and functions $F_0,F_1,G_0,G_1$
satisfying the conclusions of Proposition \ref{quasiFFGG} and which are independent of $x$ and $y$.
\end{cor}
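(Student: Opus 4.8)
The plan is to combine the two previous results in the obvious way: apply Proposition \ref{wavelikesymm} to produce coordinates $x,y,u,v,p,q$ in which $F,G,f,g$ are independent of $x,y$, and then run the normalization argument from Proposition \ref{quasiFFGG} while checking that it can be carried out equivariantly with respect to the translation symmetries $X = \di/\di x$, $Y = \di/\di y$. First I would invoke Proposition \ref{wavelikesymm} to obtain a local coordinate system with section \eqref{wavecof} in which $F = F(u,v,p)$ and $G = G(u,v,q)$, and hence (via \eqref{solvefg1}, \eqref{solvefg2}) $f,g$ are likewise independent of $x,y$. Since the \BT/ is quasilinear, Definition \ref{quasilinear-BT-def} forces $F = F^0 + F^1 p$ and $G = G^0 + G^1 q$ with $F^i, G^i$ now functions of $u,v$ alone.

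The substantive content is then the change of variables eliminating the first-order nonlinearity. The underlying PDEs have the form \eqref{unonlin}, $u_{xy} = A u_x u_y + B u_x + C u_y + D$, but now with $A,B,C,D$ functions of $u$ alone (no $x,y$ dependence), and similarly for the $v$-equation. The point transformation $U = \varphi(u)$ used in Proposition \ref{quasiFFGG} must be chosen with $\varphi$ a function of $u$ alone — which is possible precisely because $A = A(u)$, so the ODE $\varphi_u = \int e^{-A(u)}\,du$ has a solution $\varphi(u)$ with no $x,y$ dependence. The analogous $V = \psi(v)$ works the same way. I would point out that because $\varphi,\psi$ depend only on $u,v$, these transformations commute with the translations $\di/\di x, \di/\di y$, so the symmetry vector fields are preserved and the new coordinates are again adapted to $X,Y$; consequently the transformed $F^1, G^1$ (namely $\varphi_u F^1/\psi_v$ and $\psi_v G^1/\varphi_u$) and the new $F^0, G^0$ remain functions of $u,v$ alone. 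Relabeling $U,V$ as $u,v$ gives coordinates satisfying the conclusions of Proposition \ref{quasiFFGG} with all of $F^0,F^1,G^0,G^1$ independent of $x,y$.

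I expect the only real obstacle to be bookkeeping: one must confirm that every step in the proofs of Propositions \ref{waveprop}, \ref{quasiFFGG}, and \ref{wavelikesymm} either is already autonomous or can be performed using only transformations of $u,v,p,q$ (and fiber motions) that do not reintroduce $x,y$ dependence. In particular, the relabeling of $p_2, q_1$ as $p,q$ and the choice of $\varphi,\psi$ are the places to check; once one observes that the relevant ODEs and Pfaff-theorem applications all take place in the autonomous category (their coefficients being $x,y$-independent), the conclusion follows. So the proof is essentially: "apply Proposition \ref{wavelikesymm}, then rerun the proof of Proposition \ref{quasiFFGG} verbatim, noting that all auxiliary functions introduced there inherit independence from $x,y$." I would write it at roughly that level of detail, spelling out only the $\varphi = \varphi(u)$ point since that is where autonomy of $A$ is actually used.
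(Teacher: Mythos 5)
Your proposal is correct and is essentially the argument the paper intends: the corollary is stated without proof as an immediate combination of Propositions \ref{wavelikesymm} and \ref{quasiFFGG}, and your key observation---that the normalizing change of variables $U=\varphi(u)$, $V=\psi(v)$ can be chosen independent of $x,y$ precisely because autonomy forces $A=A(u)$ (and likewise for the $v$-equation), so the translation symmetries and the autonomous form of $F^0,F^1,G^0,G^1$ survive the relabeling---is exactly the point that makes the combination work.
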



\section{Classifying \BT/s of sine-Gordon type}\label{classification-sec}


In order to classify wavelike normal \BT/s for which both tensors $\tau_1, \tau_2$ are nonvanishing,
 as in Proposition \ref{waveprop}, it would be necessary to find all solutions of the PDE system
 \eqref{pde1}-\eqref{dropFG} satisfying the appropriate nondegeneracy conditions.
After using equations \eqref{solvefg1}, \eqref{solvefg2} to eliminate $f, g$,
the remaining equations are an overdetermined system of four PDEs for the two functions $F, G$.
The Cartan theory of exterior differential systems~\cite{CFB2} provides a powerful technique for
determining the solution space for such overdetermined systems; unfortunately, for this system the required computations appear to be intractable in full generality.  However, if we restrict our attention to the autonomous, quasilinear case, then we can characterize the solution space to the corresponding PDE system \eqref{pde1}--\eqref{dropFG}.

Thus, we suppose that $\P \searrow \B$ is a quasilinear, wavelike, autonomous normal \BT/.  The results of \S \ref{results-sec} imply that there exist local coordinates $x,y,u,v,p,q$ on $\B$, functions $\alpha, \kappa, \mu$ of $u$ alone, and functions $\beta, \eta, \xi$ of $v$ alone
for which the functions $F,G,f,g$ may be written as
\begin{equation}\label{intro-FGfg}
\begin{gathered}
 F = F_0(u,v) + F_1(u,v) p, \qquad G = G_0(u,v) + G_1(u,v) q, \\
f = \mu(u) + \alpha(u) \bar{p} + \kappa(u) q = \mu(u) + \alpha(u) F + \kappa(u) q , \\
g = \xi(v) + \eta(v) p + \beta(v) \bar{q} = \xi(v) + \eta(v) p + \beta(v) G
\end{gathered}
\end{equation}
with $F_1 G_1 \neq 0$ or $1$.
Substituting the expressions \eqref{intro-FGfg} into the system \eqref{pde1}--\eqref{dropFG} and comparing coefficients with respect to $p$ and $q$ yields the following system of 8 PDEs:
\begin{subequations}\label{bigsys}
\begin{align}
F_{1,u} + G_1 F_{1,v} &= 0, &  G_{1,v} + F_1 G_{1,u} &=0, \label{topline} \\
G_0 F_{1,v} &= (\alpha-\eta)F_1, &  F_0 G_{1,u} &= (\beta-\kappa)G_1, \label{secondline} \\
F_{0,u} + G_1 F_{0,v} &= \kappa - F_1 G_1 \beta, &  G_{0,v} + F_1 G_{0,u} &=\eta - F_1 G_1 \alpha,
\label{thirdline} \\
G_0 F_{0,v} &= \mu+\alpha F_0 -(\xi + \beta G_0)F_1,
 &  F_0 G_{0,u} &=\xi + \beta G_0-(\mu+\alpha F_0)G_1.\label{fourthline}
\end{align}
\end{subequations}

\begin{remark*}
If one were trying to solve for a quasilinear \BT/ between two given equations $u_{xy} = \mu + \alpha u_x + \kappa u_y$
and $v_{xy} = \xi + \eta v_x + \beta v_y$, then \eqref{bigsys} constitutes an overdetermined system of 8 equations
for the 4 functions $F_0$, $F_1$, $G_0$, $G_1$.  One expects the solvability conditions for this system
(typically obtained by differentiating and equating mixed partials) to severely limit the space of solutions,
and for a generic pair of equations one expects it to be empty.  Even if we do not prescribe the underlying equations
related by the transformation, the system is still highly overdetermined, for we must augment \eqref{bigsys}
by the requirement that the $v$-partials of $\alpha, \kappa, \mu$ are zero
and the $u$-partials of $\beta, \eta, \xi$ are zero, yielding a system of 14 equations for 10 functions.
\end{remark*}

In order to analyze the solution space of the system \eqref{bigsys}, we must first consider separately the cases where the product $F_0 G_0$ is either zero or nonzero.  (If $F_0 G_0$ is not identically equal to zero, then we restrict our attention to the open set of $\B$ on which it is nonzero.)

\subsection*{Case 0: $F_0 G_0 = 0$.}  Without loss of generality, assume that $G_0 = 0$.  From the first equation in \eqref{secondline} and the fact that $F_1 \neq 0$, it follows that $\alpha(u) = \eta(v)$, and hence both $\alpha$ and $\eta$ must be equal to some constant $c$.  Then from the second equation in \eqref{thirdline} and the fact that $F_1 G_1 \neq 1$, it follows that $\alpha = \eta = 0$.
Finally, substituting these conditions into equations \eqref{fourthline} (and using the fact that $F_1 G_1 \neq 1$) yields $\mu = \xi = 0$.

Now, from equations \eqref{intro-FGfg}, we see that $\B$ is a \BT/ between two PDEs of the form
\[ u_{xy} = \kappa(u) u_y, \qquad v_{xy} = \beta(v) v_y. \]
These PDEs are both Monge-integrable.  \BT/s between such PDEs are not particularly interesting, because the underlying PDEs can already be solved using only ODE techniques.

\bigskip
Thus, for the rest of this section we will assume that $F_0 G_0 \ne 0$.  Before proceeding with our analysis, we prove the following theorem regarding the size of the solution space:

\begin{theorem}\label{finite-dimension-thm}
The space of solutions to the system \eqref{bigsys} satisfying $F_0 G_0 \neq 0$ is finite-dimensional, with dimension at most 10.
\end{theorem}

\begin{proof}
The first two equations in \eqref{bigsys} can be solved for $F_{1,u}$ and $G_{1,v}$ to obtain
\begin{equation}\label{solve-first-two-eqs}
F_{1,u} = -G_1 F_{1,v}, \qquad G_{1,v} = -F_1 G_{1,u}.
\end{equation}
These equations, together with their derivatives, determine all partial derivatives of $F_1$ involving any $u$-derivatives and all partial derivatives of $G_1$ involving any $v$-derivatives in terms of $v$-derivatives of $F_1$ and $u$-derivatives of $G_1$.

Next, observe that the last 6 equations in \eqref{bigsys} can be solved for $\alpha, \beta, \kappa, \eta, \mu, \xi$ to obtain
\begin{equation}\label{solve-for-greeks}
\begin{alignedat}{2}
\alpha & = \frac{(F_1 G_0)_v + F_1^2 G_{0,u}}{F_1 (1 - F_1 G_1)}, & \qquad \qquad
\beta & = \frac{(G_1 F_0)_u + G_1^2 F_{0,v}}{G_1 (1 - F_1 G_1)}, \\[0.1in]
\kappa & = \frac{F_{0,u} + G_1 F_{0,v} + F_1 F_0 G_{1,u}}{1 - F_1 G_1},  &\qquad \qquad
\eta & = \frac{G_{0,v} + F_1 G_{0,u} + G_1 G_0 F_{1,v}}{1 - F_1 G_1},  \\[0.1in]
\mu & = \frac{F_1 G_0 F_{0,v} - F_0 (F_1 G_0)_v}{F_1 (1 - F_1 G_1)}, &\qquad \qquad
\xi & = \frac{G_1 F_0 G_{0,u} - G_0 (G_1 F_0)_u}{G_1 (1 - F_1 G_1)}.
\end{alignedat}
\end{equation}
Differentiating equations \eqref{solve-for-greeks} and imposing the conditions
\[ \alpha_v = \beta_u = \kappa_v = \eta_u = \mu_v = \xi_v = 0 \]
yields 6 equations which can be solved algebraically for the second-order derivatives
\[ F_{1,vv},\, G_{1,uu},\, F_{0,uu},\, F_{0,uv},\, G_{0,uv},\, G_{0,vv}. \]
These expressions, together with their derivatives, determine all the remaining second and higher-order partial derivatives of $F_1$ and $G_1$, together with all second and higher-order partial derivatives of $F_0$ involving any $u$-derivatives and all second and higher-order partial derivatives of $G_0$ involving any $v$-derivatives.  For example, the expressions for the mixed partials of $F_0$ and $G_0$ take the form
\begin{align}
F_{0,uv} &= -\dfrac{F_0}{G_0 G_1} G_{0,uu} + \ldots, \label{foll}\\
G_{0,uv} &= -\dfrac{G_0}{F_0 F_1} F_{0,vv} + \ldots, \label{goll}
\end{align}
where we have suppressed terms involving only first-order partials.
Differentiating \eqref{foll} with respect to $v$ and \eqref{goll} with respect to $u$ yields a pair of equations
which are linear in $F_{0,uvu}$ and $G_{0,uvv}$ (and for which the coefficient matrix has nonzero determinant $(1-F_1 G_1)/(F_1 G_1)$), which
can be solved to express these third-order partials in terms of first-order partials and $F_{0,vv}, G_{0,uu}$.

We can also determine the third-order partials $F_{0,vvv}$ and $G_{0,uuu}$.  The expressions for the second-order partials we obtained before include
$$F_{0,uu} = \dfrac{F_0(1+F_1^2 G_1^2)}{G_0 F_1^2 G_1^2} G_{0,uu} - \dfrac{F_{0,vv}}{F_1^2} + \ldots, \qquad
G_{0,vv} = \dfrac{G_0(1+F_1^2 G_1^2)}{F_0 F_1^2 G_1^2} F_{0,vv} - \dfrac{G_{0,uu}}{G_1^2} + \ldots.$$
Computing $0=\left(F_{0,uu}\right)_v - \left(F_{0,uv}\right)_u$ using the first equation and \eqref{foll},
and computing $0=\left(G_{0,uv}\right)_v - \left(G_{0,vv}\right)_u$ using the second equation and \eqref{goll}, and using the
expressions for $F_{0,uvu}$ and $G_{0,uvv}$ obtained just above, yields a pair of equations which are linear in $F_{0,vvv},\, G_{0,uuu}$.
Since the coefficient matrix has a similar nonzero determinant, these can be solved to determine $F_{0,vvv}$ and $G_{0,uuu}$
in terms of first-order partials and $F_{0,vv}, G_{0,uu}$

This process produces a total differential system for the functions $F_0, G_0, F_1, G_1$ and a finite number of their derivatives.  In particular, the Frobenius theorem guarantees that the solution space has dimension at most 12, with any local solution being completely determined by the values of
\[ F_0,\, G_0,\, F_1,\, G_1,\, F_{0,u},\, F_{0,v},\, G_{0,u},\, G_{0,v},\, F_{1,v},\, G_{1,u},\, G_{0,uu},\, F_{0,vv} \]
at a single point $(u_0, v_0)$.  Moreover, these values are not independent; equating the mixed partial derivatives
\[ (F_{0,uvv})_v = (F_{0,vvv})_u, \qquad (G_{0,uuu})_v = (G_{0,uuv})_u \]
yields two functionally independent relations
that must be satisfied by these 12 parameters, resulting in a solution space of dimension at most 10.

\end{proof}

We note that it is possible---and indeed, likely---that differentiating the remaining relations alluded to above may yield more relations, thereby reducing the solution space even farther. Unfortunately the algebraic computations involved in doing so appear to be intractable, even with the assistance of {\sc Maple}.
However, we can make significant progress towards understanding the solution space in greater detail via direct analysis of the PDE system \eqref{bigsys}. In order to proceed in this direction, we must divide into cases based on whether the functions $F_1$ and $G_1$ are functionally dependent or independent.  First, we consider the case where these functions are both constant.

\subsection*{Case 1: $F_1, G_1$ both constant.}
In this case, we have the following proposition.

\begin{prop}\label{F1G1-constant-prop}
Suppose that $F_1, G_1$ are both constant.  Then
\begin{itemize}
\item If $F_1 G_1 \neq -1$, then $\B$ is a transformation between two constant-coefficient linear PDEs.
\item If $F_1 G_1 = -1$, then $\B$ is one of the following:
\begin{enumerate}
\item a transformation between two constant-coefficient linear PDEs;
\item a transformation between two PDEs of the form
\[ u_{xy} = a_1 e^{2ru} + a_2 e^{-2ru}, \qquad v_{xy} = b_1 e^{2rv} + b_2 e^{-2rv}; \]
\item a transformation between two PDEs of the form
\[ u_{xy} = a_1 \cos(2ru) + a_2 \sin(2ru), \qquad v_{xy} = b_1 \cos(2rv) + b_2 \sin(2rv). \]
\end{enumerate}
\end{itemize}
\end{prop}

\begin{proof}
From equations \eqref{secondline}, it follows that $\alpha(u) = \eta(v)$ and $\beta(v) = \kappa(u)$, and hence we must have
\[ \alpha(u) = \eta(v) = c_1, \qquad \beta(v) = \kappa(u) = c_2 \]
for some constants $c_1, c_2$.  Then equations \eqref{thirdline} become
\begin{equation}\label{case1-thirdline-reduced}
F_{0,u} + G_1 F_{0,v} = (1 - F_1 G_1) c_2, \qquad  G_{0,v} + F_1 G_{0,u}  =(1 - F_1 G_1) c_1.
\end{equation}
Since the right-hand sides of equations \eqref{case1-thirdline-reduced} are constants, it follows that
\begin{equation}\label{case1-thirdline-integrated}
F_0 = c_2 (u - F_1 v) + \phi(v - G_1 u), \qquad
G_0 = c_1 (v - F_1 u) + \psi(u - F_1 v)
\end{equation}
for some functions $\phi, \psi$ of one variable.

For ease of notation, let
\[ w = v - G_1 u, \qquad z = u - F_1 v, \]
so that we have
\begin{equation}\label{case1-F0G0-wz}
 F_0 = c_2 z + \phi(w), \qquad G_0 = c_1 w + \psi(z).
\end{equation}
Substituting \eqref{case1-F0G0-wz} into \eqref{fourthline} and solving for $\mu, \xi$ yields
\begin{equation}\label{case1-muxi-wz}
\begin{aligned}
 \mu & = -\frac{1}{(1 - F_1 G_1)} \left((c_2 z + \phi(w))(c_1 - F_1 \psi'(z)) + (c_1 w + \psi(z))(c_2 F_1 - \phi'(w)) \right), \\
 \xi & = -\frac{1}{(1 - F_1 G_1)} \left((c_2 z + \phi(w))(c_1 G_1 - \psi'(z)) + (c_1 w + \psi(z))(c_2 - G_1 \phi'(w)) \right).
\end{aligned}
\end{equation}
Now, $\mu$ must be a function of $u$ alone and $\xi$ must be a function of $v$ alone.  In terms of the $(w, z)$ coordinates, we have
\[ \frac{\partial}{\partial u} = \frac{\partial}{\partial z} - G_1 \frac{\partial}{\partial w}, \qquad
\frac{\partial}{\partial v} = \frac{\partial}{\partial w} - F_1 \frac{\partial}{\partial z}. \]
Therefore, we must have
\[ \frac{\partial}{\partial v} \mu = \left(\frac{\partial}{\partial w} - F_1 \frac{\partial}{\partial z}\right) \mu = 0, \qquad
 \frac{\partial}{\partial u} \xi = \left(\frac{\partial}{\partial z} - G_1 \frac{\partial}{\partial w}\right) \xi = 0 .\]
Applying these conditions to the expressions \eqref{case1-muxi-wz} yields the following differential equations for $\phi$ and $\psi$:
\begin{equation}\label{case1-phipsi-pdes}
\begin{aligned}
\phi''(w)(c_1 w + \psi(z)) & =  F_1^2 \psi''(z)(c_2 z + \phi(w)) , \\
G_1^2 \phi''(w)(c_1 w + \psi(z)) & = \psi''(z)(c_2 z + \phi(w))  .
\end{aligned}
\end{equation}

\begin{lemma}\label{case1-FG-not-minus-1}
If $F_1 G_1 \neq -1$, then $\B$ is a transformation between two constant-coefficient linear PDEs.
\end{lemma}

\begin{proof}
Since $F_1 G_1 \neq 1$, the hypothesis implies that $(F_1 G_1)^2 \neq 1$.  Then equations \eqref{case1-phipsi-pdes} imply that
\[ \phi''(w)(c_1 w + \psi(z)) = \psi''(z)(c_2 z + \phi(w)) = 0. \]
Note that these equations are equivalent to
\[ \phi''(w) G_0 = \psi''(z) F_0 = 0. \]
Since $F_0 G_0 \neq 0$, it follows that $\phi''(w) = \psi''(z) = 0$.  Thus $\phi(w)$ and $\psi(z)$ are linear functions of their arguments.  It follows immediately that $\mu$ is a linear function of $u$ and $\xi$ is a linear function of $v$.  Therefore, $\B$ is a transformation between two PDEs of the form
\[ u_{xy} = c_1 u_x + c_2 u_y + c_3 u + c_4, \qquad v_{xy} = c_1 v_x + c_2 v_y + c_5 v + c_6. \]
\end{proof}

Now suppose that $F_1 G_1 = -1$, so that equations \eqref{case1-phipsi-pdes} are linearly dependent.  By performing a change of variables of the form
\[ x \to \lambda x, \qquad y \to \frac{1}{\lambda} y \]
in exactly one of the underlying PDEs, we may assume that $F_1 = 1$, $G_1 = -1$; then equations \eqref{case1-phipsi-pdes} reduce to the single PDE
\begin{equation}\label{case1-phipsi-pde1}
\phi''(w)(c_1 w + \psi(z))  = \psi''(z)(c_2 z + \phi(w)).
\end{equation}
Note that this equation is equivalent to
$\phi''(w) G_0  = \psi''(z)F_0$, and since $F_0 G_0 \neq 0$, it follows that $\phi''(w), \phi''(z)$ are either both zero or both nonzero.  The argument in the proof of Lemma \ref{case1-FG-not-minus-1} shows that if $\phi''(w) = \psi''(z) = 0$, then $\B$ is a transformation between two constant-coefficient linear PDEs, so assume that $\phi''(w), \phi''(z)$ are both nonzero.

Differentiating equation \eqref{case1-phipsi-pde1} with respect to $z$ and $w$ yields
\begin{equation*}
\phi'''(w)\psi'(z)  = \psi'''(z)\phi'(w).
\end{equation*}
By hypothesis, $\phi'(w)$ and $\psi'(z)$ are not identically zero, so we may write this equation as
\begin{equation}\label{case1-phipsi-pde2}
\frac{\phi'''(w)}{\phi'(w)} = \frac{\psi'''(z)}{\psi'(z)},
\end{equation}
and both sides of equation \eqref{case1-phipsi-pde2} must be constant.

{\bf Case 1(a):}
Both sides of equation \eqref{case1-phipsi-pde2} are equal to zero.  Then we have
\[ \phi(w) = a_2 w^2 + a_1 w + a_0, \qquad \psi(z) = b_2 z^2 + b_1 z + b_0 \]
for some constants $a_i, b_i$.  Substituting into equation \eqref{case1-phipsi-pde1} yields
\begin{equation}\label{case1-phipsi-pde1-linear}
 a_2 b_2 (w^2 - z^2) + (a_1 b_2 - a_2 c_1) w + (b_2 c_2 - a_2 b_1) z + (b_2 a_0 - a_2 b_0) = 0.
\end{equation}
From the leading term, we have $a_2 b_2 = 0$.  If $a_2 = b_2 = 0$, then $\phi(w)$ and $\psi(z)$ are both linear functions, and $\B$ is a transformation between two constant-coefficient linear PDEs.  So suppose that $a_2 \neq 0$.  (The case $b_2 \neq 0$ is analogous.) Then equation \eqref{case1-phipsi-pde1-linear} implies that $b_2 = b_1 = b_0 = c_1 = 0$,  but then $G_0 = 0$, contrary to hypothesis.

{\bf Case 1(b):}
Both sides of equation \eqref{case1-phipsi-pde2} are equal to a positive constant $r^2$.  Then we have
\[ \phi(w) = a_1 e^{rw} + a_2 e^{-rw} + a_0, \qquad \psi(z) = b_1 e^{rz} + b_2 e^{-rz} + b_0 \]
for some constants $a_i, b_i$. Substituting into equation \eqref{case1-phipsi-pde1} yields
\begin{equation}\label{case1-phipsi-pde1-exp}
(c_1 w + b_0) (a_1 e^{rw} + a_2 e^{-rw}) - (c_2 z + a_0)(b_1 e^{rz} + b_2 e^{-rz}) = 0.
\end{equation}
If $a_1 = a_2 = b_1 = b_2 = 0$, then $\phi(w)$ and $\psi(z)$ are both constant functions, and $\B$ is a transformation between two constant-coefficient linear PDEs.  So suppose that at least one of $a_1, a_2$ is nonzero.  (The assumption that at least one of $b_1, b_2$ is nonzero is similar.)  Then equation \eqref{case1-phipsi-pde1-exp} implies that $b_0 = c_1 = 0$.  If we also have $b_1 = b_2 = 0$, then $G_0 = 0$, contrary to hypothesis; therefore, it must also be the case that at least one of $b_1, b_2$ is nonzero. Then equation \eqref{case1-phipsi-pde1-exp} implies that $a_0 = b_0 = c_1 = c_2 = 0$.  Returning to $(u,v)$ coordinates, it follows that
\[ F_0 = a_1 e^{r(u+v)} + a_2 e^{-r(u+v)}, \qquad G_0 = b_1 e^{r(u-v)} + b_2 e^{-r(u-v)}, \]
and $\B$ is a transformation between two PDEs of the form
\[ u_{xy} = r(a_1 b_1 e^{2ru} - a_2 b_2 e^{-2ru}), \qquad v_{xy} = -r(a_1 b_2 e^{2rv} - a_2 b_1 e^{-2rv}). \]
Relabeling the coefficients gives the form in Proposition \ref{F1G1-constant-prop}.

{\bf Case 1(c):}
Both sides of equation \eqref{case1-phipsi-pde2} are equal to a negative constant $-r^2$.  Then we have
\[ \phi(w) = a_1 \cos(rw) + a_2 \sin(rw) + a_0, \qquad \psi(z) = b_1 \cos(rz) + b_2 \sin(rz) + b_0 \]
for some constants $a_i, b_i$. Substituting into equation \eqref{case1-phipsi-pde1} yields
\begin{equation}\label{case1-phipsi-pde1-trig}
(c_1 w + b_0) (a_1 \cos(rw) + a_2 \sin(rw)) - (c_2 z + a_0)(b_1 \cos(rz) + b_2 \sin(rz)) = 0.
\end{equation}
The same argument as in the previous case shows that $a_0 = b_0 = c_1 = c_2 = 0$. Returning to $(u,v)$ coordinates, it follows that
\[ F_0 = a_1 \cos(r(u+v)) + a_2 \sin(r(u+v)), \qquad G_0 = b_1 \cos(r(u-v)) + b_2 \sin(r(u-v)), \]
and $\B$ is a transformation between two PDEs of the form
\begin{gather*}
u_{xy} = \tfrac{1}{2}r\big{[}(a_1 b_2 + a_2 b_1)\cos(2ru) + (a_2 b_2 - a_1 b_1)\sin(2ru)\big{]}, \\
v_{xy} = \tfrac{1}{2}r\big{[}(a_1 b_2 - a_2 b_1)\cos(2rv) + (a_2 b_2 + a_1 b_1)\sin(2rv)\big{]}.
\end{gather*}
Relabeling the coefficients gives the form in Proposition \ref{F1G1-constant-prop}.
\end{proof}

As a consequence of the above, we note that when $F_1, G_1$ are both constant $\B$ is either a transformation between two constant-coefficient linear PDEs or a transformation between two $f$-Gordon equations of the form
\begin{equation}\label{case1-f-Gordon}
u_{xy} = f(u), \qquad v_{xy} = g(v).
\end{equation}
Conversely, equations \eqref{topline}, \eqref{secondline} imply that for any $f$-Gordon equations with $f(u)$, $g(v)$ not both zero (which implies that $F_0 G_0 \neq 0$), both $F_1$ and $G_1$ must be constant.  This means that the only B\"acklund transformations of this form between $f$-Gordon equations are those specified in Proposition \ref{F1G1-constant-prop}.  Thus we have the following Proposition.

\begin{prop}\label{autonomous-f-Gordon-prop}
Suppose that $\B$ is a quasilinear, wavelike, autonomous normal B\"acklund  transformation between two $f$-Gordon equations $u_{xy} = f(u)$ and $v_{xy} = g(v)$. Then $\B$ is one of the transformations specified in Proposition \ref{F1G1-constant-prop}.  In particular, the functions $f(u)$ and $g(v)$ must have one of the following forms:
\begin{enumerate}
\item $f(u) = a_1 u + a_0$, $g(v) = b_1 v + b_0$;
\item $f(u) = a_1 e^{2ru} + a_2 e^{-2ru}$, $g(v) = b_1 e^{2rv} + b_2 e^{-2rv}$;
\item $f(u) = a_1 \cos(2ru) + a_2 \sin(2ru)$, $g(v) = b_1 \cos(2rv) + b_2 \sin(2rv)$.
\end{enumerate}
\end{prop}
We also note that this proposition does not preclude the possibility that other $f$-Gordon equations may have B\"acklund transformations satisfying less stringent conditions.


\bigskip
For the rest of this section, we will assume that at least one of $F_1, G_1$ is non-constant. This naturally
leads to two possibilities, where the differentials $dF_1, dG_1$ are either linearly dependent or independent at each point.  We label these as Cases 2 and 3, respectively.
\bigskip

\subsection*{Case 2: $F_1, G_1$ dependent.}

\begin{prop}\label{F1G1-dependent-prop}
Suppose that $F_1, G_1$ are functionally dependent, i.e., that $dF_1 \wedge dG_1 = 0$, but $F_1$ and $G_1$ are not both constant.  Then one of $F_1, G_1$ is constant, and (assuming WLOG that $G_1$ is constant) $\B$ is one of the following:
\begin{enumerate}
\item a transformation between two constant-coefficient linear PDEs;
\item a transformation between either two PDEs of the form
\[ u_{xy} = c_3 e^{G_1 r_2 u}, \qquad v_{xy} = c_2 e^{r_2 v} v_y, \]
or two PDEs of the form
\[ u_{xy} = c_1 e^{-G_1 r_2 u} u_x, \qquad v_{xy} = c_4 e^{-r_2 v}; \]
\item a transformation between two PDEs of the form
\[ u_{xy} = c_{1} e^{G_1 r_0 u} u_x, \qquad v_{xy} = c_{2} e^{r_0 v} v_y; \]
\item a transformation between two PDEs of the form
\[ u_{xy} = c_1 e^{G_1 r_0 u} \left( G_1 u_x - \frac{b}{r_0} \right) + b u_y , \qquad
v_{xy} = c_2 e^{r_0 v} \left( v_y - \frac{b}{r_0} \right) + b v_x
\]
with $b, c_1, c_2 \neq 0$.
\end{enumerate}

\end{prop}

\begin{remark}
With the exception of the last transformation on the list, these are all transformations involving either Liouville's equation $z_{xy} = e^z$, the Monge-integrable equation $z_{xy} = e^z z_x$ (or equivalently, $z_{xy} = e^z z_y$), or constant coefficient linear equations, all of which appear on Goursat's list \cite{Goursat} of PDEs that are Darboux-integrable at second order.  The transformation (4), however, is different: Up to scalings, translations, and interchanging the independent variables, both underlying PDEs are equivalent to the PDE
\begin{equation}\label{case2-new-example}
 z_{xy} = e^z (z_y - b) + b z_x.
\end{equation}
This PDE is Monge-integrable: For any solution $z(x,y)$, the quantity $I = e^{-by}(z_x - e^z)$ satisfies $\frac{\partial I}{\partial y} = 0$.  However, the PDE \eqref{case2-new-example} is not Darboux-integrable at second order unless $b=0$.
\end{remark}

\begin{proof}[Proof of Prop. 4.5]
First we prove the following lemma:

\begin{lemma} If $dF_1 \wedge dG_1 = 0$, then either $F_1$ or $G_1$ is locally constant.
\end{lemma}
\begin{proof}
We may rewrite \eqref{topline} as
$$\begin{bmatrix} F_{1,u} & F_{1,v}\end{bmatrix}\cdot \begin{bmatrix} 1 & G_1\end{bmatrix}=0,\qquad
\begin{bmatrix} G_{1,u} & G_{1,v}\end{bmatrix}\cdot \begin{bmatrix} F_1 & 1\end{bmatrix}=0.
$$
Since the vectors $\begin{bmatrix} F_{1,u} & F_{1,v}\end{bmatrix}$ and $\begin{bmatrix} G_{1,u} & G_{1,v}\end{bmatrix}$
are linearly dependent, but the vectors
$\begin{bmatrix} 1 & G_1\end{bmatrix}$ and $\begin{bmatrix} F_1 & 1\end{bmatrix}$ are linearly independent, it follows that one of the vectors $\begin{bmatrix} F_{1,u} & F_{1,v}\end{bmatrix}$, $\begin{bmatrix} G_{1,u} & G_{1,v}\end{bmatrix}$ must be zero.  Therefore, one of $F_1$ or $G_1$ must be locally constant.
\end{proof}

Without loss of generality, {\bf we will assume that $G_1$ is constant} in the rest of this subsection.
From \eqref{secondline}, we have $\beta(v) = \kappa(u)$, and hence we must have
$$\beta=\kappa=b$$
for some constant $b$.
Moreover, \eqref{topline} implies that $F_1$ is a function of the single variable $v - G_1 u$.  Thus, if we let $w = v - G_1 u$, we may set\begin{equation}\label{F1twoform}
F_1 = \varphi(w)
\end{equation}
for some unknown function $\varphi(w)$. Similarly, the first equation in \eqref{thirdline} gives
\begin{equation}\label{F0twoform}
\begin{aligned}
F_0 & = \psi(w) + b (1-F_1 G_1) u \\
& = \psi(w) + b (1- G_1 \varphi(w)) u
\end{aligned}
\end{equation}
for some unknown function $\psi(w)$.  Then solving \eqref{secondline} for $G_0$ gives
\begin{equation}\label{G0twoform}
G_0 = \dfrac{\varphi}{\varphi'}(\alpha-\eta).
\end{equation}
Thus, the \BT/s in this case are determined by the single-variable functions $\alpha(u)$, $\eta(v)$, $\mu(u)$, $\xi(v)$,
$\varphi(w)$ and $\psi(w)$.

Substituting \eqref{G0twoform} into the second equation of \eqref{thirdline} and dividing by the (nonzero) coefficient of $\eta_v$ gives
\begin{equation}\label{zero3}
(\alpha_u + A_0 \alpha)\varphi = \eta_v + B_0 \eta,
\end{equation}
where
\begin{equation}\label{AABB}
A_0 = \frac{1}{\varphi^2 \varphi'} ((\varphi')^2 - (1 - G_1\varphi) \varphi \varphi''), \quad
B_0 = \frac{1}{\varphi \varphi'} ((\varphi')^2 - (1 - G_1\varphi)(\varphi \varphi'' - (\varphi')^2)).
\end{equation}
In what follows, we will use this equation, together with the fact that $\alpha$, $\eta$ and $\varphi$ are functions of
different variables, to narrow down the possibilities.

\begin{lemma}\label{A0B0-both-constant-lemma}
The functions $A_0$ and $B_0$ are either both constant or both non-constant.
\end{lemma}
\begin{proof}  It is easy to check that $d B_0/dw = \varphi\, d A_0/dw$.
\end{proof}

\begin{prop}\label{third-order-ODE-prop}
The functions $\alpha$ and $\eta$ each satisfy constant-coefficient homogeneous linear differential equations of
order at most $3$.  The equations are of order at most 2 if $A_0,B_0$ are constants; in that case if
$\varphi'/\varphi$ is non-constant then $\alpha$ and $\eta$ are constant multiples of $e^{r_1 u}$ and $e^{r_2 v}$ respectively,
where $A_0 =-r_1$ and $B_0 = -r_2$.
\end{prop}
\begin{proof}
First, suppose that $A_0 = -r_1$ and $B_0 = -r_2$ are constant.  Differentiating equation \eqref{zero3} with respect to $u$ and then dividing by $\varphi$  yields a second-order linear ODE
\begin{equation}\label{case2-alpha-ODE-A0B0const}
 \alpha_{uu} - \left(r_1 + G_1 \frac{\varphi'}{\varphi}\right) \alpha_u + G_1 r_1 \frac{\varphi'}{\varphi}\alpha = 0
\end{equation}
where the coefficients are functions of $w=v- G_1 u$.  In fact, the roots of the auxiliary equation are $r_1$ and $\rho = G_1 \varphi'/\varphi$, so that
$$\alpha = C_1 e^{r_1 u} + C_2 e^{\rho u},$$
where $\rho, C_1, C_2$ are {\it a priori} functions of $w$.
However, the fact that $\alpha$ must have no $w$-dependence forces $C_1, C_2$ to be constants, as well as
$\rho$ if $C_2 \ne 0$.  (To see this, set the derivative of the expression for $\alpha$ with respect to $w$, holding $u$ fixed, equal to zero; then take a $u$-derivative, holding $w$ fixed.  This results in two homogeneous linear equations in the exponentials, and thus the matrix of coefficients for these equations must be singular.  Then setting the highest-order coefficient of $u$ in the
determinant yields this result.)  Thus, $\alpha$ satisfies a constant-coefficient homogeneous linear ODE of order at most two, and if $\rho$ is non-constant $\alpha$ must be a constant multiple of $e^{r_1 u}$.  Similarly, dividing equation \eqref{zero3} by $\varphi$, differentiating with respect to $v$, and then multiplying by $\varphi$ yields \begin{equation}\label{case2-eta-ODE-A0B0const}
\eta_{vv} - \left(r_2 + \frac{\varphi'}{\varphi}\right) \eta_v + r_2 \frac{\varphi'}{\varphi} \eta = 0.
\end{equation}
Now we apply the same, argument noting here that the roots of the auxiliary equation are $r_2$  and $\varphi'/\varphi$.

When $A_0$ and $B_0$ are non-constant functions of $w$, the argument is similar, but with an additional step. Differentiating \eqref{zero3} with respect to $u$ (holding $v$ fixed) yields
\[ \varphi\, \alpha_{uu}  + (-G_1 \varphi' + \varphi A'_0) \alpha_u - G_1 (\varphi A_0)' \alpha = -G_1 B'_0 \eta, \]
where primes denote derivatives with respect to $w$.  Since $B_0$ is assumed to be non-constant, we may divide both sides by $B'_0$ and differentiate again with respect to $u$ to eliminate the $\eta$ term.
This results in a third-order ODE for the function $\alpha(u)$, with coefficients that are functions of $w$.  Thus, $\alpha$ is a sum of
terms of the form $C_i e^{\rho_i u}$ where $C_i,\rho_i$ are (possibly complex-valued) functions of $w$. However, a similar procedure of taking a $w$-derivative
(holding $u$ fixed) and two additional $u$-derivatives (holding $w$ fixed) results in a set of linear homogeneous equations which are
admit non-trivial solutions only if each $C_i$ is constant, and $\rho_i$ is constant whenever $C_i$ is nonzero.  Thus, $\alpha$ satisfies
a constant-coefficient homogeneous linear ODE of order at most three.  A similar argument yields an ODE of order at most three satisfied
by $\eta(v)$.
\end{proof}

We now further pursue each of the cases indicated by Prop. \ref{third-order-ODE-prop}.

\medskip
\noindent
{\bf Case 2(a):} Suppose that $A_0 = -r_1$ and $B_0 = -r_2$ are constant.

First, suppose that $\varphi'/\varphi = -r_0$ is constant.
Then we have $\varphi(w) = c_0 e^{-r_0 w}$ for some nonzero constant $c_0$.  (Note that, since we have assumed $\varphi' \neq 0$, we must have $r_0 \neq 0$.)  Then the expressions \eqref{AABB} and the conditions $A_0 = -r_1$, $B_0 = -r_2$ imply that
\[ r_1 = G_1 r_0, \qquad r_2 = r_0. \]
Conversely, if $r_1 = G_1 r_2$, then the general solution to the (compatible) ODEs for $\varphi$ determined by \eqref{AABB} and the conditions $A_0 = -r_1$, $B_0 = -r_2$ is $\varphi(w) = c_0 e^{-r_0 w}$.  In this case, the general solution to equation \eqref{case2-alpha-ODE-A0B0const} is
\[ \alpha(u) = c_{11} e^{G_1 r_0 u} + c_{12} e^{-G_1 r_0 u}, \]
and the general solution to equation \eqref{case2-eta-ODE-A0B0const} is
\[ \eta(v) = c_{21} e^{r_0 v} + c_{22} e^{-r_0 v}. \]
Substituting these expressions, together with $\varphi = c_0 e^{-r_0(v - G_1u)}$, into \eqref{zero3} yields
\[ c_{22} =  G_1 c_0 c_{12}. \]
Now equations \eqref{fourthline} can be solved to obtain expressions for $\mu$ and $\xi$.
Then the conditions that $\mu$ is a function of $u$ alone and $\xi$ is a function of $v$ alone imply (after substantial computation) that we have the following possibilities:
\begin{enumerate}[label=(\roman*)]
\item $b \neq 0$,\  $c_{12} = c_{11} - G_1 c_0 c_{21} = 0$, and $\psi(w)$ is a solution to the ODE
\[ \psi' = -\frac{G_1 c_0 r_0}{(G_1 c_0 - e^{r_0 w})} \psi - b c_0 e^{-r_0 w}. \]
Then we have $\mu = \displaystyle{-\frac{b c_0 c_{21}}{r_0} e^{G_1 r_0 u}}$,\  $\xi = \displaystyle{-\frac{b c_{21}}{r_0} e^{r_0 v}}$, and $\B$ is a transformation between equations of the form
\[ u_{xy} = c_0 c_{21} e^{G_1 r_0 u} \left( G_1 u_x - \frac{b}{r_0} \right) + b u_y , \qquad
v_{xy} = c_{21} e^{r_0 v} \left( v_x - \frac{b}{r_0} \right) + b v_y.
\]
\item $b = c_{12} = \mu = \xi = 0$, \ $\psi = c_3(c_{21} - c_{11} e^{-r_0 w})$, and $\B$ is a transformation between equations of the form
\[ u_{xy} = c_{11} e^{G_1 r_0 u} u_x, \qquad v_{xy} = c_{21} e^{r_0 v} v_x. \]  
\end{enumerate}

Suppose that $\varphi'/\varphi$ is not constant---or equivalently, that $r_1 \neq G_1 r_2$.  Then the only solutions to equation \eqref{case2-alpha-ODE-A0B0const} are
$\alpha = c_{1} e^{r_1 u}$ and the only solutions to equation \eqref{case2-eta-ODE-A0B0const} are $\eta = c_{2} e^{r_2 v}$.  Furthermore, the ODEs for $\varphi$ given by $A_0 = -r_1$, $B_0 = -r_2$ are compatible and may be reduced to the single separable first-order ODE
\begin{equation}\label{case2-phi-ODE-A0B0const}
 \varphi' = \frac{\varphi (r_2 - r_1 \varphi)}{G_1 \varphi - 1},
\end{equation}
whose solution may be given as an implicitly defined function of $w$ but is not particularly enlightening.
Now equations \eqref{fourthline} can be solved to obtain expressions for $\mu$ and $\xi$.  Then the conditions that $\mu$ is a function of $u$ alone and $\xi$ is a function of $v$ alone imply (after substantial computation) that we have the following possibilities:
\begin{enumerate}[resume*]
\item $b = c_1 = 0$, $r_1 = -G_1 r_2$, and $\psi(w)$ is a solution to the ODE
\[ \psi' = \frac{2 G_1 r_2 \varphi}{(1 - G_1 \varphi)^2} \psi. \]
Then we have $\mu = c_3 e^{G_1 r_2 u}$, $\xi = 0$, and $\B$ is a transformation between equations of the form
\[ u_{xy} = c_3 e^{G_1 r_2 u}, \qquad v_{xy} = c_2 e^{r_2 v} v_x. \]  
\item $b = c_2 = 0$, $r_1 = -G_1 r_2$, and $\psi$ is a solution to the ODE
\[ \psi' = \frac{r_2 (G_1^2 \varphi^2 + 2 G_1 \varphi - 1)}{(1 - G_1 \varphi)^2} \psi. \]
Then we have $\mu = 0$, $\xi = c_4 e^{-r_2 v}$, and $\B$ is a transformation between equations of the form
\[ u_{xy} = c_1 e^{-G_1 r_2 u} u_x, \qquad v_{xy} = c_4 e^{-r_2 v}. \]
\item Either $r_1 = c_2 = 0$ and $r_2 c_1 \neq 0$, or $r_2 = c_1 = 0$ and $r_1 c_2 \neq 0$.  In this case, $\B$ is a transformation between constant-coefficient linear PDEs.
\end{enumerate}

{\bf Case 2(b):} Suppose that $A_0$ and $B_0$ are non-constant functions of $w$. By Proposition \ref{third-order-ODE-prop}, the functions $\alpha(u)$ and $\eta(v)$ are each solutions of a constant-coefficient homogeneous linear ODE of order at most 3.  This implies that $\alpha$ has one of of the following forms (allowing for the possibility of complex values for the constants $r_{ij}$ and $c_{ij}$):
\begin{enumerate}
\item $\alpha(u) = c_{11} e^{r_{11}u} + c_{12} e^{r_{12}u} + c_{13} e^{r_{13}u}$, with $r_{11}, r_{12}, r_{23}$ all distinct;
\item $\alpha(u) = c_{11} e^{r_{11}u} + e^{r_{12}u}(c_{12} + c_{13} u)$, with $r_{11}, r_{12}$ distinct;
\item $\alpha(u) = e^{r_{11}u} (c_{11} + c_{12} u + c_{13} u^2)$.
\end{enumerate}
Similarly, $\eta$ has one of the forms:
\begin{enumerate}
\item $\eta(v) = c_{21} e^{r_{21}v} + c_{22} e^{r_{22}v} + c_{23} e^{r_{23}v}$, with $r_{21}, r_{22}, r_{23}$ all distinct;
\item $\eta(v) = c_{21} e^{r_{21}v} + e^{r_{22}v}(c_{22} + c_{23} v)$, with $r_{21}, r_{22}$ distinct;
\item $\eta(v) = e^{r_{21}v} (c_{21} + c_{22} v + c_{23} v^2)$.
\end{enumerate}
Moreover, substituting these expressions into \eqref{zero3} and evaluating along any line of the form $v - G_1 u = C$, with $C$ constant and chosen so that neither $-A_0$ nor $-B_0$ is equal to any of the constants $r_{ij}$ along this line, shows that $\alpha$ and $\eta$ must both be of the same form, with $r_{1j} = G_1 r_{2j}$ for all $j$ and with $c_{1j} = 0$ if and only if $c_{2j}=0$.

For each of the possible forms for the pair $(\alpha(u), \eta(v))$, substituting $v = G_1 u + w$ into equation \eqref{zero3} and comparing like terms in $u$ imposes one or more ODEs on the function $\varphi(w)$.
In the cases where these ODEs are compatible, solving equations \eqref{fourthline} for $\mu$ and $\xi$, and then imposing the additional conditions that $\mu$ is a function of $u$ alone and $\xi$ is a function of $v$ alone implies (after substantial computation) that either $\psi = b = 0$ (which contradicts the hypothesis $F_0 \neq 0)$ or the underlying PDEs for $u$ and $v$ are both constant-coefficient linear equations.

\end{proof}

\bigskip

\subsection*{Case 3: $F_1, G_1$ independent.}

The following lemma, whose proof is an easy computation, will be key to our analysis of this case:

\begin{lemma}\label{exactdhdk}  The equations \eqref{topline} imply that the following 1-forms are closed and hence locally exact:
\[ dh = \dfrac1\Delta (dv - G_1 du), \qquad dk = \dfrac1\Delta (du - F_1 dv), \]
where $\Delta = 1-F_1 G_1$.
\end{lemma}
The differentials $dh, dk$ are linearly independent, and thus we may introduce local coordinates $(h,k)$ as a local alternative to $(u,v)$.  The differentials $du, dv$ are related to $dh, dk$ by
\begin{equation}\label{duv to dhk}
du = dk + F_1 dh, \qquad dv = dh + G_1 dk.
\end{equation}
For use below, we note that the coordinate vector fields in the two systems are related by
\begin{equation}\label{deehkdefs}
\begin{aligned}
\dfrac{\di}{\di h} &= \dfrac{\di}{\di v} + F_1 \dfrac{\di}{\di u}  & \dfrac{\di}{\di k} &= \dfrac{\di}{\di u} + G_1 \dfrac{\di}{\di v},  \\
\dfrac{\di}{\di u} &= \dfrac{1}\Delta\left( \dfrac{\di}{\di k} - G_1 \dfrac{\di}{\di h}\right), &
\dfrac{\di}{\di v} &= \dfrac{1}\Delta\left( \dfrac{\di}{\di h} - F_1 \dfrac{\di}{\di k}\right) .
\end{aligned}
\end{equation}
From this, is it evident that equations \eqref{topline} are equivalent to the conditions that $F_1$ is a function of $h$ alone and $G_1$ is a function of $k$ alone.

Let $F_1', F_1''$, etc., denote the derivatives of $F_1$
with respect to $h$, and similarly for the derivatives of $G_1$ with respect to $k$.  Then from \eqref{deehkdefs} we have
\[ F_{1,v} = \dfrac{1}{\Delta} F_1', \qquad G_{1,u} = \dfrac{1}{\Delta} G_1'. \]
Taking this into account, solving \eqref{secondline} for $F_0$ and $G_0$ gives
\begin{equation}\label{fogovals}
F_0 = (\beta-\kappa) \Delta \dfrac{G_1}{G_1'}, \qquad G_0 = (\alpha-\eta) \Delta \dfrac{F_1}{F_1'}.
\end{equation}
Equations \eqref{thirdline} may now be written as
\[ \dfrac{\di}{\di k} F_0 = \kappa - F_1 G_1 \beta, \qquad \dfrac{\di}{\di h} G_0= \eta - F_1 G_1 \alpha. \]
Substituting the expressions \eqref{fogovals} into these equations gives (after some simplifications)
\begin{align}
F_1\,(\alpha_u + A_0 \alpha) = \eta_v + B_0 \eta,
 \label{zero3ap}\\
G_1\, (\beta_v + C_0 \beta) = \kappa_u + D_0 \kappa,
\label{zero4ap}
\end{align}
where
\begin{equation}\label{A0B0C0D0-expressions}
\begin{alignedat}{2}
A_0 & = \frac{ (F_1')^2 - F_1 F_1''}{F_1^2 F_1'}, & \qquad B_0 & = \frac{2 (F_1')^2 - F_1 F_1''}{F_1 F_1'}, \\
C_0 & = \frac{ (G_1')^2 - G_1 G_1''}{G_1^2 G_1'}, & \qquad D_0 & = \frac{2 (G_1')^2 - G_1 G_1''}{G_1 G_1'}.
\end{alignedat}
\end{equation}

\begin{lemma}\label{A0B0C0D0-constant-lemma}
The functions $A_0$ and $B_0$ (resp., $C_0$ and $D_0$) are either both constant or both non-constant.
\end{lemma}
\begin{proof}
The conclusion follows immediately from the identities $\dfrac{d B_0}{dh} = F_1 \dfrac{d A_0}{dh}$ and $\dfrac{d D_0}{dk} = G_1 \dfrac{d C_0}{dk}$.
We can also reach this conclusion by solving the ODEs obtained by setting one of these coefficients to a constant (and we will need these solutions later anyway).  For example, setting $A_0 = -m_1$ is     to the ODE
\begin{equation}\label{A0-const-ODE}
 F_1 F_1'' - (F_1')^2 = m_1 F_1^2 F_1'
\end{equation}
which has a first integral $\dfrac{F_1'}{F_1} = m_1 F_1 - n_1$ for a constant $n_1$.  Using this and the identity $B_0 = F_1 A_0 + \dfrac{F_1'}{F_1}$ immediately gives $B_0 = -n_1$.  Further integration gives
$F_1 = \dfrac{n_1}{q_1 e^{n_1 h} + m_1}$
if $n_1 \neq 0$, or $F_1 = \dfrac{1}{q_1 - m_1 h}$ when $n_1=0$.  (Because $F_1' \ne 0$, in the first case $q_1 \ne 0$ and in the second case $m_1 \ne 0$.)
%
%
\end{proof}

The last lemma shows that we can hope to determine transformations explicitly in the following special case:

\medskip
\noindent
{\bf Case 3(a):} Suppose that $A_0 = -m_1$, $B_0 = -n_1$, $C_0 = -m_2$, and $D_0 = -n_2$ are all constant.

\begin{prop}\label{F1G1-independent-A0B0C0D0-const-prop}
Suppose that $F_1, G_1$ are functionally independent and that the functions $A_0, B_0, C_0, D_0$ in equations \eqref{A0B0C0D0-expressions} are all constant.
Then $\B$ is one of the following:
\begin{enumerate}
\item a transformation between two PDEs of the form
\[   u_{xy} = s_1 e^{mu} u_x + t_2 e^{-mu} u_y, \qquad v_{xy} = s_2 e^{-nv} v_x + t_1 e^{nv} v_y   \]
with $m, n \neq 0$ and $s_1 t_2 = s_2 t_1$;
\item a transformation between a PDE of the form
\[  u_{xy} = s_1 e^{mu} u_x + t_2 e^{-mu} u_y   \]
with $m \neq 0$, and a constant-coefficient linear PDE.
\end{enumerate}

\end{prop}

\begin{proof}
First, we note that  equations \eqref{zero3ap}, \eqref{zero4ap} have the obvious solutions
\begin{equation}\label{obvious-greek-solns}
\alpha(u) = s_1 e^{m_1 u}, \qquad \eta(v) = s_2 e^{n_1 v}, \qquad \beta(v) = t_1 e^{m_2 v}, \qquad \kappa(u) = t_2 e^{n_2 u},
\end{equation}
where $s_1, s_2, t_1, t_2 \in \R$.  These are precisely the solutions for which both sides of equations \eqref{zero3ap} and \eqref{zero4ap} vanish identically.  Next, we will consider the other possibilities.

\begin{lemma}\label{more-solutions-lemma}
With $A_0, B_0, C_0, D_0$ as above, equations \eqref{zero3ap}, \eqref{zero4ap} have no additional solutions besides those in \eqref{obvious-greek-solns} unless
\[ m_2 = -n_1, \qquad n_2 = -m_1. \]

\end{lemma}

\begin{proof}
Without loss of generality, suppose that equation \eqref{zero3ap} has
additional solutions besides those in \eqref{obvious-greek-solns}. For these solutions, neither side of \eqref{zero3ap} is identically zero, and consequently
\begin{equation}\label{F1-is-a-product}
 F_1 = \frac{U(u)}{V(v)}
\end{equation}
for some functions $U, V$.  (Specifically, $U = (\alpha_u - m_1 \alpha)^{-1}$ and $V = (\eta_v - n_1 \eta)^{-1}$.)  Substituting this expression into the first equation from \eqref{topline} yields
\begin{equation}\label{G1-is-a-product}
 G_1 = \frac{U' V}{U V'},
\end{equation}
and then the second equation in \eqref{topline} is equivalent to
\[ \frac{U U'' - (U')^2}{U'} = \frac{V V'' - (V')^2}{V'}. \]
Since the left-hand side is a function of $u$ alone and the right-hand side is a function of $v$ alone, both sides must be equal to a constant $c_0$.
Setting both sides equal to zero, solving for $U''$ and $V''$, using these expressions to determine $F_1''$ and $G_1''$, and finally substituting
in \eqref{A0B0C0D0-expressions} and simplifying, yields
$$D_0 = -A_0 = \dfrac{U' + c_0}{U}, \qquad B_0 = -C_0 = \dfrac{V'+c_0}{V}.$$
%
%
%
\end{proof}

In light of Lemma \ref{more-solutions-lemma}, we first consider the cases where $m_2 = -n_1$ and $n_2 = -m_1$.  For ease of notation, set
\[ m = m_1 = -n_2,  \qquad  n = m_2 = -n_1. \]
We note that $m,n$ cannot both be zero, since $B_0 - F_1 A_0 = F_1'/F_1$ and so $A_0=B_0=0$ would imply that $F_1$ is constant.

\medskip
\noindent
{\em Case 3(a)(i):} Suppose that $m, n$ are both nonzero.  Then $A_0 = -m$ and $C_0 = -n$ give second-order ODEs for $F_1, G_1$ that
we solve, as in the proof of Lemma \ref{A0B0C0D0-constant-lemma}, to obtain
\begin{equation}\label{F1G1-in-terms-of-hk-mn-nonzero}
 F_1(h) = \frac{n}{q e^{-n h} - m}, \qquad G_1(k) = \frac{m}{r e^{-m k} - n}
\end{equation}
for nonzero constants $q, r \in \R$.  Then we can integrate equations \eqref{duv to dhk} to obtain the local coordinate transformation
\begin{equation}\label{hk-to-uv-nice-mn}
u = k - \frac{1}{m}(nh + \ln(q e^{-nh} - m)), \qquad
v = h - \frac{1}{n}(mk + \ln(r e^{-mk} - n)).
\end{equation}
The inverse transformation is given by
\begin{equation}\label{uv-to-hk-nice-mn}
h = v - \frac{1}{n} \ln \left( \dfrac{m n e^{mu + nv} - 1}{q n e^{mu} - r} \right)  , \qquad
k = u - \frac{1}{m} \ln \left( \dfrac{m n e^{mu + nv} - 1}{r m e^{nv} - q} \right) ,
\end{equation}
and thus we may write the expressions \eqref{F1G1-in-terms-of-hk-mn-nonzero} in terms of the $(u,v)$ coordinates as
\begin{equation}\label{F1G1-in-terms-of-uv-mn-nonzero}
F_1 = \frac{n(r - qn e^{mu})}{q e^{-nv} - rm}, \qquad
G_1 = \frac{m(q - rm e^{nv})}{r e^{-mu} - qn}.
\end{equation}
Now, after clearing denominators appropriately, equations \eqref{zero3ap}, \eqref{zero4ap} may be written as
\begin{align}
n (q n e^{mu} - r)(\alpha_u - m \alpha) & = (r m - q^{-nv})(\eta_v + n \eta), \label{eqns-for-greeks-nice-mn-1} \\
m (r m e^{nv} - q)(\beta_v - n \beta) & = (q n - r^{-mu})(\kappa_u + m \kappa).  \label{eqns-for-greeks-nice-mn-2}
\end{align}
In each of these equations, one side is a function of $u$ alone and the other side is a function of $v$ alone, so both sides must be constant.  A straightforward computation shows that the general solution of equations \eqref{eqns-for-greeks-nice-mn-1} and \eqref{eqns-for-greeks-nice-mn-2} is given by
\begin{equation}\label{not-so-obvious-greek-solns-mn-nonzero}
\begin{aligned}
\alpha(u) & = s_1 e^{mu} + s_0 \left( q m n  \ln (q n e^{mu} - r)e^{mu} - m (q m n e^{mu} u - r) \right), \\
\eta(v) & = s_2 e^{-nv} + s_0 \left(q e^{-nv} \ln (r m e^{nv} - q) + rm \right), \\
\beta(v) & =  t_1 e^{nv} + t_0 \left( r m n  \ln (r m e^{nv} - q)e^{nv} - n (r m n e^{nv} v - q) \right)  , \\
\kappa(u) & =  t_2 e^{-mu} + t_0 \left(r e^{-mu} \ln (q n e^{mu} - r) + qn \right)    ,
\end{aligned}
\end{equation}
for constants $s_0,s_1,s_2,t_0,t_1,t_2$.  Among these, $s_0, t_0$ are such that both sides of equation \eqref{eqns-for-greeks-nice-mn-1} are equal to $m^2 n r^2 s_0$ and both sides of equation \eqref{eqns-for-greeks-nice-mn-2} are equal to $m n^2 q^2 t_0$.

Finally, consider equations \eqref{fourthline}.  Substituting the expressions \eqref{F1G1-in-terms-of-uv-mn-nonzero} and \eqref{not-so-obvious-greek-solns-mn-nonzero} into \eqref{fourthline} and solving for $\mu$ and $\xi$, we find that $\mu$ is a function of $u$ alone and $\xi$ is a function of $v$ alone if and only if
\[ s_0 = t_0 = s_1 t_2 - s_2 t_1 = 0. \]
Moreover, these conditions imply that $\mu = \xi = 0$.  Thus we see that $\B$ is a transformation between equations of the form
\begin{equation}\label{underlying-pdes-mn-nonzero}
 u_{xy} = s_1 e^{mu} u_x + t_2 e^{-mu} u_y, \qquad v_{xy} = s_2 e^{-nv} v_x + t_1 e^{nv} v_y,
\end{equation}
with $s_1 t_2 = s_2 t_1$.  The transformation is given by
\begin{equation}\label{BT-mn-nonzero}
\begin{aligned}
u_x & = \frac{n(r - qn e^{mu})}{q e^{-nv} - rm} v_x + \frac{1}{r m}(r - q n e^{mu})(t_1 e^{nv} - t_2 ^{-mu}), \\
v_y & = \frac{m(q - rm e^{nv})}{r e^{-mu} - qn} v_x + \frac{1}{q n}(q - r m e^{nv})(s_1 e^{mu} - s_2 e^{-nv}).
\end{aligned}
\end{equation}
Note that this is actually a 1-parameter family of transformations, with parameter $\lambda = q/r$.

\medskip
\noindent
{\em Case 3(a)(ii):} Suppose that one of $m, n$ is equal to zero; without loss of generality, assume that $m \neq 0$ and $n=0$.
Again integrating the second-order ODEs for $F_1, G_1$ gives
\begin{equation}\label{F1G1-in-terms-of-hk-n-zero}
 F_1(h) = \frac{1}{q - m h}, \qquad G_1(k) = r e^{m k}
\end{equation}
for some nonzero constants $q, r \in \R$.  Then we can integrate equations \eqref{duv to dhk} to obtain the local coordinate transformation
\begin{equation}\label{hk-to-uv-nice-mn-n-zero}
u = k - \frac{1}{m} \ln(m h - q), \qquad
v = h + \frac{r}{m} e^{mk}.
\end{equation}
The inverse transformation is given by
\begin{equation}\label{uv-to-hk-nice-n-zero}
h =\dfrac{m v + q r e^{mu}}{m(r e^{mu} + 1)} , \qquad
k = u + \frac{1}{m} \ln \left( \dfrac{m v - q}{r e^{mu} + 1} \right) ,
\end{equation}
and thus we may write the expressions \eqref{F1G1-in-terms-of-hk-n-zero} in terms of the $(u,v)$ coordinates as
\begin{equation}\label{F1G1-in-terms-of-uv-n-zero}
F_1 = \frac{r e^{mu} + 1}{q - m v}, \qquad
G_1 = \frac{r e^{mu} (mv - q)}{r e^{mu} + 1}.
\end{equation}
Now, after clearing denominators appropriately, equations \eqref{zero3ap}, \eqref{zero4ap} may be written as
\begin{align}
(r e^{mu} + 1)(\alpha_u - m \alpha) & = (q - mv )\eta_v, \label{eqns-for-greeks-n-zero-1} \\
r (m v - q)\beta_v  & = (e^{-mu} + r)(\kappa_u + m \kappa).  \label{eqns-for-greeks-n-zero-2}
\end{align}
As in the previous case, both sides of each of these equations must be constant.  A straightforward computation shows that the general solution of equations \eqref{eqns-for-greeks-n-zero-1} and \eqref{eqns-for-greeks-n-zero-2} is given by
\begin{equation}\label{not-so-obvious-greek-solns-n-zero}
\begin{aligned}
\alpha(u) & = s_1 e^{mu} + s_0 \left( r e^{mu} (\ln (r e^{mu} + 1) - m u) - 1 \right), \\
\eta(v) & = s_2  - s_0 \ln (m v - q), \\
\beta(v) & =  t_1  - t_0 r \ln (m v - q)  , \\
\kappa(u) & =  t_2 e^{-mu} + t_0 \left( e^{-mu} \ln (r e^{mu} + 1) - r \right)    ,
\end{aligned}
\end{equation}
where $s_0, t_0$ are such that that both sides of equation \eqref{eqns-for-greeks-n-zero-1} are equal to $m s_0$ and both sides of equation \eqref{eqns-for-greeks-n-zero-2} are equal to $m r^2 t_0$.

Now consider equations \eqref{fourthline}.  Substituting the expressions \eqref{F1G1-in-terms-of-uv-n-zero} and \eqref{not-so-obvious-greek-solns-n-zero} into \eqref{fourthline} and solving for $\mu$ and $\xi$, we find that $\mu$ is a function of $u$ alone and $\xi$ is a function of $v$ alone if and only if $s_0 = t_0 =  0$.
Moreover, these conditions imply that $\mu =  0$ and
\[ \xi(v) =  \frac{1}{m}(s_1 t_2 - s_2 t_1) (mv - q)   .  \]
Thus we see that $\B$ is a transformation between the equation
\[ u_{xy} = s_1 e^{mu} u_x + t_2 e^{-mu} u_y \]
and the constant-coefficient linear equation
\[ v_{xy} = s_2 v_x + t_1 v_y + \frac{1}{m}(s_1 t_2 - s_2 t_1) (mv - q). \]
The transformation is given by
\begin{equation}\label{BT-n-zero}
\begin{aligned}
u_x & = \frac{r e^{mu} + 1}{q - m v} v_x + \frac{1}{m} e^{-mu} (r e^{mu} + 1)(t_1 e^{mu} - t_2), \\
v_y & = \frac{r e^{mu} (mv - q)}{r e^{mu} + 1} v_x + \frac{1}{m}(q - m v)(s_1 e^{mu} - s_2 ).
\end{aligned}
\end{equation}
Note that this is actually a 2-parameter family of transformations, with parameters $q, r$.

If the constants $m_1, n_1, m_2, n_2$ do not satisfy the hypotheses of Lemma \ref{more-solutions-lemma}, then $\alpha, \beta,\kappa,\eta$ are
given by \eqref{obvious-greek-solns}.
However, the remaining analysis is more complicated.  We can still solve ODEs for the functions $F_1$ and $G_1$, as in Lemma \ref{A0B0C0D0-constant-lemma}; for instance, if $m_1, n_1, m_2, n_2$ are all nonzero, then we have
\begin{equation}\label{F1G1-bad-constants}
F_1 = \frac{n_1}{q_1 e^{n_1 h} + m_1}, \qquad G_1 = \frac{n_2}{q_2 e^{n_2 k} + m_2}
\end{equation}
for some nonzero constants $q_1, q_2 \in \R$.  We can still substitute these expressions into equations \eqref{duv to dhk} and integrate to obtain the local coordinate transformation
\begin{equation}\label{hk-to-uv-bad-constants}
u = \frac{1}{m_1} \left( m_1 k + n_1 h - \ln(q_1 e^{n_1 h} + m_1) \right), \qquad v = \frac{1}{m_2} \left( m_2 h + n_2 k - \ln(q_2 e^{n_2 k} + m_2) \right).
\end{equation}
However, the inverse transformation is surprisingly unwieldy, so in order to investigate equations \eqref{fourthline}, we have little choice but to work in $(h,k)$ coordinates and use the transformation \eqref{hk-to-uv-bad-constants} to express the functions \eqref{obvious-greek-solns} in terms of $h$ and $k$.  When we substitute the resulting expressions (along with \eqref{F1G1-bad-constants}) into \eqref{fourthline}, solve for $\mu$ and $\xi$, and impose the conditions that
\[ \Delta \dfrac{\partial \mu}{\partial v} = \left( \dfrac{\di}{\di h} - F_1 \dfrac{\di}{\di k}\right)\mu = 0, \qquad
\Delta \dfrac{\partial \xi}{\partial u} = \left( \dfrac{\di}{\di k} - G_1 \dfrac{\di}{\di h}\right)\xi = 0,
 \]
we find (after substantial computation) that there are no solutions unless $m_1 = -n_2$ and $m_2 = -n_1$.  Thus we conclude that the only solutions for which $A_0, B_0, C_0, D_0$ are all constant are those described above.
\end{proof}

Before leaving Case 3(a), we wish to show that there are weaker hypotheses that lead to the classification in Prop. \ref{F1G1-independent-A0B0C0D0-const-prop}.  To see how these arise, we begin by solving the equations \eqref{thirdline}
by integration.  Using \eqref{fogovals} we can rewrite these equations as
\begin{align}
\dfrac{\di}{\di h} \left( (\alpha-\eta) \dfrac{F_1}{\Fp} \right) &= \eta, \label{zero3app}\\
\dfrac{\di}{\di k} \left( (\beta-\kappa) \dfrac{G_1}{\Gp} \right) &= \kappa. \label{zero4app}
\end{align}
(Note that these equations are equivalent to \eqref{zero3ap} and \eqref{zero4ap}.)
Let $\teta$ be a $v$-antiderivative of $\eta$; then, because $\di \teta/\di h = \eta$, we can integrate \eqref{zero3app} to get
\begin{equation} (\alpha-\eta) \dfrac{F_1}{\Fp} = \teta + \psi_0,\label{zero3a} \end{equation}
where $\psi_0$ is a some function of $k$ only. Next, letting $\talpha$ denote a $u$-antiderivative of $\alpha$ allows us to
rewrite \eqref{zero3a} as
$$\dfrac{\di}{\di h} \talpha = F_1 \eta + \Fp (\teta + \psi_0),$$
and integrating this with respect to $h$ (holding $k$ fixed) gives
\begin{equation}\label{zero3b}
\talpha = F_1 (\teta + \psi_0) + \psi_1,
\end{equation}
where $\psi_1$ is some function of $k$.  Applying $\di/\di_k$ to both sides gives
\begin{equation}\label{zero3c}
\alpha = F_1 G_1 \eta + F_1 \psi_0' + \psi_1'.
\end{equation}
We now use \eqref{zero3a} and \eqref{zero3c} to solve for 
$\eta$, yielding
\begin{equation}
\eta = \dfrac1\Delta\left(\psi_1' + F_1 \psi_0' - \dfrac{\Fp}{F_1} (\teta + \psi_0)\right).\label{zero3e}
\end{equation}
Similarly, letting $\tbeta$ denote a $v$-antiderivative of $\beta$ and $\tkappa$ a $u$-antiderivatives of $\kappa$, we have
\begin{equation}\label{zero4ab}
(\beta-\kappa) \dfrac{G_1}{\Gp} = \tkappa+\phi_0, \quad \tbeta = G_1 (\tkappa+\phi_0) + \phi_1
\end{equation}
for some functions $\phi_0, \phi_1$ of $h$ only, and by differentiating the latter equation and eliminating $\beta$ we obtain
\begin{equation}\label{zero4e}
\kappa = \dfrac1\Delta\left(\phi_1' + G_1 \phi_0' -\dfrac{\Gp}{G_1}(\tkappa+\phi_0)\right).
\end{equation}

The left-hand sides of \eqref{zero3e},\eqref{zero4e} are functions of  $v$ only (respectively, $u$ only), so applying the
operator $\Delta \di/\di_u = \di/\di_k - G_1 \di/\di_h$ (resp., $\Delta \di/\di_v$) yields zero on the left, while on the right we can obtain an expression that is linear in $\teta$ with coefficients involving $F_1, G_1, \psi_0, \psi_1$ and their derivatives
(resp., linear in $\tkappa$ with coefficients involving $F_1, G_1, \phi_0, \phi_1$ and derivatives).  The resulting equations are
\begin{align}
\dfrac{R}{F_1^2\Delta^2}(\teta + \psi_0) - \dfrac{\di}{\di k} \left( \dfrac{\psi_1' + F_1 \psi_0'}{\Delta}\right)+\dfrac{(\psi_0' + F_1 G_1^2 \psi_1')\Fp}{F_1 \Delta^2} &= 0 \label{zero3f} \\
\dfrac{S}{G_1^2 \Delta^2} (\tkappa + \phi_0) - \dfrac{\di}{\di h} \left( \dfrac{\phi_1' + G_1 \phi_0'}{\Delta}\right) + \dfrac{(\phi_0' + F_1^2 G_1 \phi_1')\Gp}{G_1 \Delta^2} &= 0 \label{zero4f},
\end{align}
where the coefficients $R, S$ (which will play an important
role in what follows) are given by
$$
R = G_1 \Delta( \Fp^2- F_1 \Fpp) + (F_1 \Gp - G_1^2 \Fp) F_1 \Fp, \quad
S = F_1 \Delta(\Gp^2 - G_1 \Gpp) + (G_1 \Fp - G_1^2 \Gp) G_1 \Gp.
$$
Note that these expressions are symmetric under the interchange of $F_1$ and $G_1$.

\begin{prop}\label{tfaq}  The following conditions are equivalent:
\renewcommand{\theenumi}{\roman{enumi}}%
\begin{enumerate}
\item $R=0$; \label{lone}
\item $F_1$ is a product of a function of $u$ and a function of $v$; \label{ltwo}
\item $S=0$; \label{lthree}
\item $G_1$ is a product of a function of $u$ and a function of $v$; \label{lfour}
\end{enumerate}
\end{prop}
\begin{proof} To show that \eqref{lone} and \eqref{ltwo} are equivalent, we use the identity
\begin{align*}
\dfrac{R}{F_1^2} &= -G_1 \Delta\dfrac{\di}{\di h} \dfrac{\Fp}{F_1} + \dfrac{\Fp}{F_1} \left( \dfrac{\di}{\di k} - G_1 \dfrac{\di}{\di h}\right) (F_1 G_1) \\
&= \Delta^2 \dfrac{\di}{\di u}\dfrac{\Fp}{F_1}  -  \dfrac{\Fp}{F_1} \Delta \dfrac{\di}{\di u} \Delta.
\end{align*}
Thus, if $R=0$ then $\dfrac{\di}{\di u} \ln\left(\dfrac{\Fp}{\Delta F_1}\right)=0$, and hence $\Fp/F_1 = a \Delta $ for some function $a(v)$.
It would follow that
$$\dfrac{\di}{\di v} \ln F_1 = \dfrac{1}{\Delta}\dfrac{\di}{\di h} \ln F_1 = a,$$
and thus $\dfrac{\di^2}{\di u \di v} \ln F_1=0$, giving (ii).  Similarly, if $F_1 = A(v) B(u)$, then it is straightforward to compute
that $R$ vanishes.  By symmetry, \eqref{lthree} and \eqref{lfour} are equivalent.

To show that \eqref{lone} and \eqref{lthree} are equivalent, we will need two further identities.  The first is
\begin{equation}\label{Rident}
\dfrac{R}{\Delta G_1 F_1^2 \Fp} = \dfrac{\Fp}{F_1^2} - \dfrac{\Fpp}{F_1 \Fp} + \dfrac{\di}{\di u} \ln(F_1 G_1).
\end{equation}
This follows from writing
\begin{equation}\label{Rident2}
\dfrac{R}{G_1 F_1^2 \Fp} = \left( \dfrac{\Fp}{F_1^2} -\dfrac{\Fpp}{F_1 \Fp}\right) \Delta + \dfrac{\Gp}{G_1} - G_1 \dfrac{\Fp}{F_1},
\end{equation}
and interpreting the last two terms as the result of applying $\Delta \dfrac{\di}{\di u} = \dfrac{\di}{\di k} - G_1 \dfrac{\di}{\di h}$ to $\ln(F_1 G_1)$.  The second identity is
\begin{equation}\label{Sident}
\dfrac{S}{\Delta^2 F_1 G_1^2} = -\dibk \dibu \ln(F_1 G_1).
\end{equation}
This follows from computing
$$\left[ \dibk, \dibu \right] = \left[ \dibu + G_1 \dibv, \dibu \right] = -G_{1,u} \dibv = -\dfrac{\Gp}{\Delta^2} \left( \dibh - F_1 \dibk\right)$$
and expanding
\begin{align*}\dibk \dibu \ln(F_1 G_1)  &= -\dfrac{\Gp}{\Delta^2} \left( \dibh - F_1 \dibk\right)\ln(F_1 G_1) + \dibu \dibk \ln G_1 \\
&= -\dfrac{\Gp}{\Delta^2}\left( \dfrac{\Fp}{F_1} - \dfrac{F_1 \Gp}{G_1}\right) + \dfrac{1}\Delta \dibk \left( \dfrac{\Gp}{G_1}\right).
\end{align*}
If $R=0$, then applying $\dibk$ to \eqref{Rident} implies that the right-hand side of \eqref{Sident} vanishes, and thus $S=0$.
Similarly, $S=0$ implies $R=0$ by symmetry.
\end{proof}

\begin{cor} The conditions in Prop. \ref{tfaq} imply the hypotheses of Prop. \ref{F1G1-independent-A0B0C0D0-const-prop}.
\end{cor}

\begin{proof}
Assuming $R=0$, \eqref{Rident} implies that
$$A_0 = \dfrac{\Fp}{F_1^2}-\dfrac{\Fpp}{F_1 \Fp}= -\dfrac{\di}{\di u} \ln(F_1 G_1).$$
The left-hand side is a function of $h$ while, by conditions (ii) and (iv) above, the right-hand side is a function of $u$.
Thus, both sides are constant.  By symmetry, $S=0$ implies that $C_0$ is constant.
\end{proof}

It turns out that the conditions in Prop. \ref{tfaq} also hold when the right-hand sides of the underlying PDEs for $u$ and $v$
are homogeneous in the first-order partials.  (Of course, the converse is not true, the exception being the transformation given by \eqref{BT-n-zero} above.)

\begin{prop}  If $\mu=\xi=0$, then the conditions in Prop. \eqref{tfaq} hold.
\end{prop}
\begin{proof}
Solving \eqref{fourthline} for the Greek-letter variables gives
\begin{equation}\label{solve42d}
\begin{pmatrix} \mu +\alpha F_0 \\ \xi + \beta G_0 \end{pmatrix} = \dfrac{1}{\Delta}\begin{pmatrix} G_0 F_{0,v} + F_1 F_0 G_{0,u} \\ F_0 G_{0,u} + G_1 G_0 F_{0,v}\end{pmatrix}.
\end{equation}
Using \eqref{thirdline} to substitute for $G_{0,u}$ in the top equation in \eqref{solve42d}
and for $F_{0,v}$ in the bottom equation, and then solving for $\mu,\xi$ gives
\begin{align}\mu &= \dfrac{1}{\Delta F_0^2} \left( \dfrac{\eta - \alpha}{F_0} - \dibv \left( \dfrac{G_0}{F_0}\right) \right)
= -\dfrac{F_0 G_0}{\Delta}  \dibv \ln\left( \dfrac{F_1 G_0}{F_0}\right) \label{case31mu} \\
\xi &= \dfrac{1}{\Delta G_0^2} \left( \dfrac{\kappa - \beta}{G_0} - \dibu \left( \dfrac{F_0}{G_0}\right) \right)
= -\dfrac{F_0 G_0}{\Delta}  \dibu \ln\left( \dfrac{G_1 F_0}{G_0}\right) \label{case31xi}
,
\end{align}
where the expressions on the right are obtained by using \eqref{fogovals} to eliminate $\eta -\alpha$ and $\kappa-\beta$.

Thus, if $\mu=\xi=0$ then $F_1 G_0 = \sigma F_0$ and $G_1 F_0 = \tau G_0$ for some functions $\sigma, \tau$ of $u,v$ respectively.
It follows that
$$0 = \dibh \dibk \ln(F_1 G_1) = \dibh \left(\dfrac{\sigma'}{\sigma}\right) + \dibk \left(\dfrac{\tau'}{\tau}\right) = F_1 \dibu \left(\dfrac{\sigma'}{\sigma}\right) + G_1 \dibv \left(\dfrac{\tau'}{\tau}\right).$$
If both derivatives of $\sigma'/\sigma$ and $\tau'/\tau$ are zero, then $\sigma,\tau$ are exponential functions of their arguments, and
substituting for $F_1 G_1 = \sigma \tau$ in \eqref{Sident} gives $S=0$.  If these derivatives are not zero, then multiplying the last displayed
equation by $F_1$ gives
$$F_1^2\dibu \left(\dfrac{\sigma'}{\sigma}\right)  = -\sigma \tau \dibv \left(\dfrac{\tau'}{\tau}\right).$$
Thus, $F_1$ is a function of $u$ times a function of $v$.
%
\end{proof}

{\bf Case 3(b):} Suppose that at least one of the pairs $(A_0, B_0)$, $(C_0, D_0)$ consists of non-constant functions.  Without loss of generality, suppose that $A_0$ and $B_0$ are non-constant functions of $h$, and consider equation \eqref{zero3ap}.  The derivatives of this equation with respect to $h$ and $k$ (using \eqref{deehkdefs} to compute derivatives of $\alpha$ and $\eta$) yield two equations that can be solved for $\alpha_{uu}$ and $\eta_{vv}$ to obtain
\begin{equation}\label{messy-second-derivatives}
\begin{aligned}
\alpha_{uu} & = \frac{1}{1 - F_1 G_1} \left((G_1 B_0 - A_0) \alpha_u  + G_1 (A_0' + A_0 B_0 - F_1 A_0^2)\alpha - G_1 A_0' \eta   \right), \\
\eta_{vv} & = \frac{1}{1 - F_1 G_1} \left( F_1(B_0 - F_1 A_0) \alpha_u - (1 - F_1 G_1) B_0 \eta_v + F_1(A_0' + A_0 B_0 - F_1 A_0^2) \alpha - F_1 A_0' \eta \right).
\end{aligned}
\end{equation}
Then computing
\[ (\alpha_{uu})_v = (\eta_{vv})_u = 0 \]
and taking \eqref{messy-second-derivatives} into account yields a relation between $\alpha_u$, $\eta_v$, $\alpha$, and $\eta$, which together with \eqref{zero3ap} can be solved for $\alpha_u$ and $\eta_v$.
Repeating the process, computing
\[ (\alpha_{u})_v = (\eta_{v})_u = 0 \]
yields two independent linear relations of the form
\[ Y_i(h,k) \alpha + Z_i(h,k) \eta = 0, \qquad i=1, 2 \]
where $Y_i$ and $Z_i$ are long differential polynomials in $F_1$ and $G_1$.
This leads to several conditions that must be satisfied:
\begin{enumerate}
\item In order for there to exist nonzero solutions $(\alpha, \eta)$, we must have $Y_1 Z_2 - Y_2 Z_1 = 0$.  This condition yields a complicated polynomial equation that is simultaneously a (highly nonlinear) $5$th-order ODE for $F_1$ and a $2$nd-order ODE for $G_1$.
\item Assuming that the previous condition is satisfied, each of the ratios $\frac{Y_i}{Z_i} = -\frac{\eta}{\alpha}$ must be equal to a function of $u$ alone times a function of $v$ alone, and so must satisfy the PDE
\[ \dfrac{\di^2}{\di u \di v}\left( \ln \left( \frac{Y_i}{Z_i} \right) \right) = 0. \]
This condition leads to additional, higher-order equations that must be satisfied by $F_1$ and $G_1$.
\end{enumerate}
We suspect that these conditions are simply too overdetermined and that there are no solutions to \eqref{zero3ap} with $A_0$ and $B_0$ non-constant---let alone that any such solutions might also satisfy equations \eqref{fourthline}.  Unfortunately the algebra is too complicated to carry out to completion, so we must leave this conjecture unsettled for the time being.

We summarize the results of Propositions \ref{F1G1-constant-prop}, \ref{F1G1-dependent-prop}, and \ref{F1G1-independent-A0B0C0D0-const-prop} in the following theorem (also taking note of Theorem \ref{finite-dimension-thm}):

\begin{theorem}\label{all-the-BTs-theorem}
Any quasilinear, wavelike, autonomous normal B\"acklund transformation is one of the following:
\begin{itemize}
\item If $F_1, G_1$ are constant, then:
   \begin{enumerate}
   \item If $F_1 G_1 \neq -1$, then $\B$ is a transformation between two constant-coefficient linear PDEs.
   \item If $F_1 G_1 = -1$, then $\B$ is one of the following:
       \begin{enumerate}
       \item a transformation between two constant-coefficient linear PDEs;
       \item a transformation between two PDEs of the form
\[ u_{xy} = a_1 e^{2ru} + a_2 e^{-2ru}, \qquad v_{xy} = b_1 e^{2rv} + b_2 e^{-2rv}; \]
        \item a transformation between two PDEs of the form
\[ u_{xy} = a_1 \cos(2ru) + a_2 \sin(2ru), \qquad v_{xy} = b_1 \cos(2rv) + b_2 \sin(2rv). \]
       \end{enumerate}
    \end{enumerate}
\item If $F_1, G_1$ are functionally dependent but not both constant, then one of them, say $G_1$, is constant, and  $\B$ is one of the following:
       \begin{enumerate}
       \item a transformation between two constant-coefficient linear PDEs;
       \item a transformation between either two PDEs of the form
\[ u_{xy} = c_3 e^{G_1 r_2 u}, \qquad v_{xy} = c_2 e^{r_2 v} v_y, \]
or two PDEs of the form
\[ u_{xy} = c_1 e^{-G_1 r_2 u} u_x, \qquad v_{xy} = c_4 e^{-r_2 v}; \]
        \item a transformation between two PDEs of the form
\[ u_{xy} = c_{1} e^{G_1 r_0 u} u_x, \qquad v_{xy} = c_{2} e^{r_0 v} v_y; \]
        \item a transformation between two PDEs of the form
\[ u_{xy} = c_1 e^{G_1 r_0 u} \left( G_1 u_x - \frac{b}{r_0} \right) + b u_y , \qquad
v_{xy} = c_2 e^{r_0 v} \left( v_y - \frac{b}{r_0} \right) + b v_x
\]
with $b, c_1, c_2 \neq 0$.
        \end{enumerate}
\item If $F_1, G_1$ are functionally independent and the functions $A_0, B_0, C_0, D_0$
defined by \eqref{A0B0C0D0-expressions})
are all constant, then $\B$ is one of the following:
       \begin{enumerate}
       \item a transformation between two PDEs of the form
\[   u_{xy} = s_1 e^{mu} u_x + t_2 e^{-mu} u_y, \qquad v_{xy} = s_2 e^{-nv} v_x + t_1 e^{nv} v_y   \]
with $m, n \neq 0$ and $s_1 t_2 = s_2 t_1$;
        \item a transformation between a PDE of the form
\[  u_{xy} = s_1 e^{mu} u_x + t_2 e^{-mu} u_y   \]
with $m \neq 0$ and a constant-coefficient linear PDE.
        \end{enumerate}
\item A possible finite-dimensional family of transformations with $F_1, G_1$ functionally independent and where
at least one of the pairs $(A_0, B_0)$, $(C_0, D_0)$.
\end{itemize}
\end{theorem}

\section{Discussion}\label{discussion-sec}

We close by noting that not all interesting \BT/s involving
integrable PDEs fit the restrictions we put in place at the beginning of \S\ref{BTdef}; in particular, the total
$\B$ can have arbitrary dimension.
For example, consider the Tzitzeica equation
\begin{equation}\label{tzitz}
(\ln h)_{xy} = h - h^{-2},
\end{equation}
which arises in connection with the construction of affine spheres \cite{RS}.
(Note that this PDE can be put into the wavelike form \eqref{waveform}
by setting $u=\ln h$.)  This equation has an auto-\BT/ which
is defined by a compatible system of total differential equations
\begin{align*}
\alpha_x &= \dfrac{ h_x\alpha + \lambda \beta}{h} -\alpha^2, &
\alpha_y &= h - \alpha \beta, \\
\beta_x &= h-\alpha \beta, & \beta_y &= \dfrac{ h_y \beta + \lambda^{-1}\alpha}{h},
\end{align*}
where $\lambda$ is an arbitrary nonzero constant.  Given a solution $h$ of \eqref{tzitz},
one solves for $\alpha$ and $\beta$, and then the new solution of \eqref{tzitz} is given by $h' = 2\alpha \beta - h$.
(Note that the system is symmetric under the interchanging of $h$ and $h'$.)
The system for $\alpha$ and $\beta$ is equivalent to a Pfaffian system of rank 2 defined
on a 7-dimensional total space $\B$, with submersions to 5-manifolds $\M, \Mbar$, each carrying
a copy of the \MA/ system encoding \eqref{tzitz}.

To give another example, in our previous paper \cite{CI09} we proved that
(almost) all hyperbolic \MA/ equations that are Darboux-integrable at second order
are linked by a \BT/ to the wave equation $z_{xy} =0$, and this transformation
is of the type described in \S\ref{BTdef}.
These transformations may be composed, in an obvious way, to yield a more general \BT/
between any two of these equations (for example, $u_{xy} = 2\sqrt{u_x u_y}/(x+y)$
and $v_{xy} =2v/(x+y)^2$) where, again, the total space has dimension 7.  However,
it also happens that certain pairs of these equations are linked to each other
by quasilinear wavelike \BT/s of the type discussed in this paper, where the total
space is 6-dimensional, and which do not appear to involve the wave equation.

\end{document}